\documentclass[12pt, a4paper]{article}

\usepackage[T1]{fontenc}
\usepackage[utf8]{inputenc}
\usepackage[english]{babel}

\usepackage[autostyle]{csquotes}
\usepackage{mlmodern}

\usepackage[top=3.3cm, bottom=3.3cm, left=3cm, right=2.5cm]{geometry}
\usepackage{fancyhdr}
\usepackage{setspace}
\usepackage{amsfonts,amssymb,amsthm,amsmath,amscd,mathtools}
\usepackage{dsfont}
\usepackage{bm}
\usepackage{esint}
\usepackage{enumitem}

\usepackage{subcaption,booktabs}

\usepackage[table]{xcolor}
\usepackage{graphicx}
\usepackage{tikz}
\usetikzlibrary{positioning, shapes.geometric, arrows.meta}

\usepackage{natbib}
\usepackage{newtxtext}
\usepackage[subscriptcorrection]{newtxmath}
\usepackage[plain,noend]{algorithm2e}
\usepackage[normalem]{ulem}
\usepackage{comment}

\usepackage[citecolor=blue]{hyperref}

\theoremstyle{definition}
\newtheorem{theorem}{Theorem}[section]

\newtheorem{proposition}[theorem]{Proposition}
\newtheorem{corollary}[theorem]{Corollary}
\newtheorem{definition}[theorem]{Definition}

\newtheorem{remark}[theorem]{Remark}

\newtheorem{condition}[theorem]{Condition}

\usepackage{titlesec}
\titleformat{\section}
{\normalfont\Large\bfseries}
{\thesection}{1em}{}
\titleformat{\subsection}
{\normalfont\large\bfseries}
{\thesubsection}{1em}{}
\titleformat{\subsubsection}
{\normalfont\normalsize\bfseries}
{\thesubsubsection}{1em}{}

\title{\textbf{Beyond Degree: Rooted Motif Signatures for Latent Position Identifiability in Graphon Models}}
\author{
	\textbf{Roland B. Sogan}\\
	\small Sorbonne Université, Université Paris Cité, CNRS,\\
	\small Laboratoire de Probabilités, Statistique et Modélisation,\\
	\small \href{mailto:roland-boniface.sogan@sorbonne-universite.fr}{\texttt{roland-boniface.sogan@sorbonne-universite.fr}}
	\and
	\textbf{Tabea Rebafka}\\
	\small MIA AgroParisTech, INRAE, Université Paris-Saclay,\\
	\small \href{mailto:tabea.rebafka@agroparistech.fr}{\texttt{tabea.rebafka@agroparistech.fr}}
	\and
}
\date{}
\begin{document}
	
	\maketitle
	%\tableofcontents
	
	\begin{abstract}

		Graphon estimation requires structural assumptions to address its intrinsic non-identifiability. A standard approach is degree-based identifiability, where the degree function is assumed to be strictly monotonic. This assumption is rather restrictive and fails for graphons with constant or non-injective degree function, even when distinct latent positions have different connectivity profiles. 
		In this paper, we introduce \emph{rooted motif signatures} as higher-order node-level representations for graphons. They extend the degree function by recording, at each latent position, the densities of rooted motifs such as triangles, cycles, paths, and other local subgraph patterns. We study the extent to which these signatures can distinguish latent positions beyond degree information. For generic finite-rank graphons,
		we prove that suitable rooted motif signatures determine the connectivity profiles of latent positions. We also explain why such a property cannot hold for arbitrary graphons without additional assumptions, since different latent positions may have identical rooted motif signatures.
		On the statistical side, we define empirical rooted motif signatures from a single observed graph and prove uniform concentration bounds for these estimators. % As aconsequence, the motif-induced distances between nodes are consistently estimated. 
		Simulation experiments illustrate that rooted motif signatures can reveal latent structure in settings where degree-based representations are uninformative, including
		graphons with constant or non-injective degree functions and stochastic block models with
		equal block degrees.
		%	We also explain why such identifiability cannot hold for arbitrary graphons without additional assumptions, since symmetries of the latent space may make distinct connectivity profiles indistinguishable by all rooted motif statistics.
	\end{abstract}

	\section{Introduction}
	
	Graphons provide a fundamental framework for modeling dense random graphs. 
	Originally introduced as limit objects for convergent sequences of dense graphs, they offer a nonparametric representation of connection probabilities between nodes and play a central role in network statistics, graph theory, and statistical learning \citep{Lovasz2006,Diaconis2007,Lovasz2012}. 
	However, inferring a graphon from an observed graph raises fundamental difficulties, among which identifiability is one of the most prominent.

	Indeed, in the graphon model, each node is associated with an unobserved latent position
	\(U_i\in[0,1]\), and the probability that two nodes \(i\) and \(j\) are connected is
	given by \(W(U_i,U_j)\), where \(W\) denotes the graphon. This latent parametrization is
	not unique, since composing \(W\) with a measure-preserving transformation of \([0,1]\)
	yields an equivalent graphon that defines the same probability distribution on graphs.
	Thus, graphons are naturally identifiable only up to measure-preserving equivalence. In many graphon estimation procedures, it is useful to select a canonical
	representative within this equivalence class. A standard way to do so is to construct a
	canonical representation of the latent positions. One common approach is based on the
	degree function
	\[
	g(u)=\int_0^1 W(u,v)\,dv.
	\]
	If \(g\) is injective, and in particular if it is strictly monotone, then expected degrees
	distinguish latent positions and induce a canonical ordering. The motivation behind
	degree-based assumptions is therefore not that they are necessary for graphon
	identifiability itself, but that they provide a simple route to latent-position
	identifiability, which in turn fixes a representative of the graphon within its
	equivalence class. Moreover, this ordering can be approximated from empirical degrees, which explains the
	role of degree-based assumptions in sorting-based and neighborhood-smoothing graphon
	estimators \citep{Yang2014,Chan2014,Zhang2017,Sogan2026}. Although successful in many situations, this degree-based strategy is rather restrictive. It fails whenever distinct latent positions have the same expected degree, even if they have different connectivity profiles. This situation occurs, for example, in stochastic block models with equal block degrees and in finite-rank graphons with constant or non-injective degree functions. To the best of our knowledge, there is no general framework for extending this form of latent-position identifiability beyond graphons with injective degree functions.

	In this work, we introduce a new notion of identifiability based on
	\emph{rooted motif signatures}. These signatures can be viewed as node-level analogues
	of the homomorphism and subgraph densities that characterize graphons at the global
	level \citep{Lovasz2006,Diaconis2007,Lovasz2012}. Instead of summarizing the whole
	graphon by global motif densities, we attach to each latent position the collection of motif densities rooted at that point. The key idea is to enrich the local description of a node beyond its degree by
	incorporating higher-order rooted motif densities, such as rooted triangles,
	four-node motifs, cycles, and clique-like patterns. These quantities capture local
	connectivity patterns that are invisible to degree information alone and can therefore distinguish latent positions with identical expected degree but different connectivity profiles \citep{Milo2002,Alon2007,Benson2016}.
	
	We investigate the extent to which rooted motif signatures determine latent connectivity profiles. In particular, we show that, for generic finite-rank graphons, suitable rooted
	motif signatures are rich enough to identify these profiles. We also explain why such identifiability cannot hold for arbitrary graphons without additional assumptions, since certain symmetries of the latent space preserve all rooted
	motif signatures. 
	Finally, we develop the statistical counterpart of this construction. Given a single
	observed graph, we define empirical rooted motif signatures and prove uniform
	concentration bounds for these estimators. These bounds provide a statistical guarantee
	for comparing nodes through their motif signatures. Simulation experiments further
	illustrate that rooted motif signatures reveal latent structure beyond degree
	information, successfully separating nodes that remain indistinguishable under
	degree-based representations.
	%Finally, we develop the statistical counterpart of this construction. Given a single observed graph, we define empirical rooted motif signatures and prove uniform concentration bounds for these estimators. These results imply the consistency of the associated motif distances and provide a theoretical foundation for motif-based graphon estimation. We further conduct simulation experiments showing that rooted motif signatures capture latent structure beyond degree information and can differentiate nodes that remain indistinguishable under degree-based representations.

	%Finally, we develop the statistical counterpart of this construction. Given a singleobserved graph, we define empirical rooted motif signatures and prove uniform concentration bounds for these estimators. As a consequence, the induced motif distances	are estimated consistently. Simulation experiments further illustrate that rooted motif	signatures reveal latent structure beyond degree information, successfully separating nodes that remain indistinguishable under degree-based representations.

	\section{Preliminaries}
	
	\subsection{Graphons}
	
	A graphon is a symmetric measurable function
	\(
	W:[0,1]^2 \to [0,1],
	\)
	which can be interpreted as an edge-probability function for dense random graphs. To
	generate a random graph \(G=(V,E)\) on \(n\) nodes, first sample latent variables
	\(U_1,\ldots,U_n\) independently and uniformly from \([0,1]\). Conditionally on these
	latent variables, draw the entries of the adjacency matrix independently as
	\[
	A_{ij}|U_i,U_j\sim \mathrm{Bernoulli}\bigl(W(U_i,U_j)\bigr),
	\qquad 1\le i<j\le n,
	\]
	set \(A_{ji}=A_{ij}\), and put \(A_{ii}=0\). The edge set is then given by
	\(
	E=\bigl\{\{i,j\}:A_{ij}=1\bigr\}.
	\)
	In this representation, the latent variable \(U_i\) plays the role of an unobserved node
	position, while the graphon \(W\) describes how connection probabilities vary across the
	latent space. Many classical random graph models can be expressed in this form. In
	particular, stochastic block models correspond to piecewise-constant graphons. Thus,
	graphons provide a natural nonparametric extension of block models and a flexible
	framework for statistical network analysis \citep{Diaconis2007, Lovasz2012}.
	
	\subsection{Identifiability of graphons}
	
	The identifiability problem in graphon models arises from the fact that the latent space is only defined up to measure-preserving transformations \citep{Aldous1981,Hoover1979,Kallenberg2005,Diaconis2007}. Let
	\(\varphi:[0,1]\to[0,1]\) be a measure-preserving transformation, that is,
	\[
	\lambda(\varphi^{-1}(A))=\lambda(A)
	\qquad
	\text{for every measurable set } A\subset[0,1],
	\]
	where \(\lambda\) denotes the Lebesgue measure. If \(U\sim\mathrm{Unif}[0,1]\), then
	\(\varphi(U)\sim\mathrm{Unif}[0,1]\). Consequently, the transformed graphon
	\[
	W^\varphi(u,v)
	=
	W(\varphi(u),\varphi(v))
	\]
	induces the same random graph model as \(W\). This shows that graphons can be identified at most up to
	measure-preserving transformations. However, transformations of this form
	do not give the full  equivalence relation between graphons. Indeed, two graphons may induce the same random graph distribution without being related by a single
	measure-preserving transformation.
	%At first sight, one might therefore identify graphons up to transformations of this form. However, this equivalence is still too restrictive. The converse implication does not hold in general: two graphons may induce the same random graphdistribution without being related by a single measure-preserving transformation.
	\cite{Diaconis2007} give the example
	\[
	W(u,v)=uv,
	\qquad
	W'(u,v)=(2u\bmod 1)(2v\bmod 1).
	\]
	These two graphons define the same exchangeable random graph model, but there is no
	measure-preserving transformation \(\psi\) such that
	\(
	W(u,v)=W'(\psi(u),\psi(v))
	\)
	almost everywhere.
	
	The appropriate statistical notion is therefore weak equivalence: two graphons are
	weakly equivalent when they induce the same random graph distribution. 
	We write this equivalently as
	\[
	\delta_\square(W,W')=0,
	\]
	where \(\delta_\square\) denotes the cut distance \citep{Lovasz2006,Borgs2008,Diaconis2007}. Recall that, for an integrable
	function \(U\) on \([0,1]^2\), the cut norm is defined by
	\[
	\|U\|_\square
	=
	\sup_{S,T\subseteq[0,1]}
	\left|
	\int_{S\times T} U(u,v)\,du\,dv
	\right|,
	\]
	where the supremum is taken over measurable sets \(S,T\). The cut distance between two
	graphons is then
	\[
	\delta_\square(W,W')
	=
	\inf_{\varphi,\varphi'}
	\left\|
	W^\varphi-(W')^{\varphi'}
	\right\|_\square,
	\]
	where the infimum is taken over measure-preserving transformations
	\(\varphi,\varphi':[0,1]\to[0,1]\). The following result
	gives a characterization of this equivalence.
	
	\begin{theorem}[\cite{Diaconis2007}, Theorem 7.1]
		Let \(W\) and \(W'\) be two graphons. Then
		\(
		\delta_\square(W,W')=0
		\)
		if and only if there exist measure-preserving transformations
		\(
		\varphi,\varphi':[0,1]\to[0,1]
		\)
		such that
		\[
		W(\varphi(u),\varphi(v))
		=
		W'(\varphi'(u),\varphi'(v)), \qquad \text{for almost every } (u,v)\in[0,1]^2.
		\]

	\end{theorem}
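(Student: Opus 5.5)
The easy direction comes first. Suppose $W^\varphi = (W')^{\varphi'}$ almost everywhere. Then $\|W^\varphi-(W')^{\varphi'}\|_\square=0$, and the infimum defining $\delta_\square$ is $0$.

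For the converse, I would go through homomorphism densities. For a finite simple graph $F$, set
\[
t(F,W)=\int_{[0,1]^{V(F)}}\prod_{\{i,j\}\in E(F)}W(x_i,x_j)\,dx.
\]
By the counting lemma, $|t(F,W)-t(F,W')|\le |E(F)|\,\|W-W'\|_\square$. Since $t(F,W^\varphi)=t(F,W)$ for every measure-preserving $\varphi$, taking the infimum shows that $\delta_\square(W,W')=0$ implies $t(F,W)=t(F,W')$ for all $F$. By inclusion–exclusion, the induced densities then coincide as well. These are exactly the probabilities that the $W$-random graph $G(n,W)$ on $[n]$ equals a fixed labelled graph. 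Hence $W$ and $W'$ generate the same exchangeable random infinite graph.

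The main step is to turn equality of the random-graph laws into a coupling. I would use the Aldous–Hoover representation (Aldous 1981, Hoover 1979, Kallenberg 2005). Write the infinite exchangeable array from $W$ as $A_{ij}=\mathbf 1\{\xi_{ij}\le W(U_i,U_j)\}$, with $U_i,\xi_{ij}$ i.i.d. uniform, and similarly for $W'$. Kallenberg's uniqueness theorem for representing functions of jointly exchangeable arrays states the following. Two measurable functions $f(\alpha,u_i,u_j,u_{ij})$ and $f'$ generate the same law if and only if there exist measure-preserving maps $T,T'$ of the variables, acting jointly on $(\alpha,u_i,u_j,u_{ij})$ with the triangular structure, such that $f\circ T=f'\circ T'$ almost everywhere. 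I would apply it to $f=\mathbf 1\{u_{ij}\le W(u_i,u_j)\}$, which has no $\alpha$-dependence. Integrating out the edge variable $u_{ij}$ recovers $W(\varphi(u_i),\varphi(u_j))$, where $\varphi$ is the component of $T$ acting on the vertex variable. The $\alpha$-component can be frozen at a typical value, and the $u_i$-component may depend only on $(\alpha,u_i)$.

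Alternatively, to avoid quoting Kallenberg, I would follow the route in \cite{Diaconis2007}. Consider the $W$-kernel operator and the "twin-free" quotient $\bar W$ on the probability space obtained by identifying points $u,u'$ with $W(u,\cdot)=W(u',\cdot)$ a.e. One shows that $t(F,\cdot)$ determines $\bar W$ up to measure-preserving isomorphism of the quotient spaces. This uses the sampling limit: the random graph sequence determines the neighbourhood measures $W(U_i,\cdot)$ as a random element of $L^1$. Both $W$ and $W'$ are then pullbacks of a common $\bar W$ under measure-preserving maps $\varphi,\varphi'$, which is the claim.

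The main obstacle is this uniqueness step. Measure-preserving maps need not be invertible (as in the example $uv$ versus $(2u\bmod 1)(2v\bmod 1)$), so one cannot hope for a single $\psi$. One must instead pass to a common refinement or a common quotient, and this requires standard Borel/Lebesgue space machinery, namely the isomorphism theorem for atomless standard probability spaces, to realise the quotient again on $[0,1]$.
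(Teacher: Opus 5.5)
The paper does not prove this statement; it quotes it from Diaconis--Janson. Your main route is correct and is essentially the argument of that source: the counting lemma turns $\delta_\square(W,W')=0$ into $t(F,W)=t(F,W')$, which gives equality in law of $G(\infty,W)$ and $G(\infty,W')$, and you then apply Kallenberg's uniqueness theorem to $\mathbf 1\{u_{ij}\le W(u_i,u_j)\}$, integrate out the edge variable, and freeze the global variable at a typical value.

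Your alternative twin-free sketch is Borgs--Chayes--Lov\'asz's approach, not Diaconis--Janson's. The common quotient there can have atoms (e.g.\ for block models), so it cannot itself be realised on $[0,1]$. What you realise on $[0,1]$ is the relative product of the two quotient maps, which is atomless with Lebesgue marginals. Your first route does not need any of this.
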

	
	Thus, the observed graph distribution identifies only the weak equivalence class of
	\(W\), not a unique representative of this class. A canonical representative can be
	obtained by following the approach of \cite{Borgs2010}, which studies identifiability at the level of latent positions through their connectivity profiles. Two latent positions
	\(u,v\in[0,1]\) are called twins if
	\(
	W(u,\cdot)=W(v,\cdot)\;
	\text{almost everywhere}.
	\)
	Twin latent positions have identical connectivity profiles. As a result, any statistic that depends only on the connection function \(W(u,\cdot)\) assigns the same value to such positions. The twin relation therefore defines a quotient of the latent space. Writing
	\[
	u\sim_{\mathrm{twin}} v
	\quad\Longleftrightarrow\quad
	W(u,\cdot)=W(v,\cdot)\ \text{a.e.},
	\]
	one obtains the quotient space
	\(
	\Omega_W=[0,1]/\!\sim_{\mathrm{twin}}.
	\)
	The elements of \(\Omega_W\) are not necessarily intervals of \([0,1]\). They are
	equivalence classes of latent positions having the same connectivity profile. In a
	stochastic block model, these classes correspond to the blocks, provided that distinct blocks have distinct connectivity profiles; blocks with identical rows in the block
	probability matrix are identified in the same quotient class. The Lebesgue measure induces a probability measure \(\mu_W\) on \(\Omega_W\), and the
	quotient graphon is defined almost everywhere by
	\(
	\widetilde W([u],[v])=W(u,v),
	\). Equivalently, if
	\(q_W:[0,1]\to\Omega_W\), \(q_W(u)=[u]\) denotes the quotient map, then
	\[
	W(u,v)=\widetilde W(q_W(u),q_W(v))
	\quad\text{for almost every }(u,v).
	\]
	Thus, \(\widetilde W\) induces the same random graph distribution as \(W\). It is
	therefore a representative of the same weak equivalence class, but defined on the
	quotient probability space \((\Omega_W,\mu_W)\) rather than on the original interval.
	Results of \cite{Borgs2010} show that, within the class of twin-free representatives,
	weak equivalence coincides with isomorphism up to null sets. Consequently, twin-freeness
	removes the ambiguity created by duplicated latent positions; it is therefore a necessary
	condition for any notion of latent-position identifiability based on connectivity
	profiles.
	
	The quotient construction removes duplicated connectivity profiles, but it does not
	provide a scalar parametrization of the latent space. A classical way to obtain such a
	parametrization is to restrict attention to graphons whose latent positions can be
	distinguished by their expected degrees. This leads to the degree function
	\[
	g(u)
	=
	\int_0^1 W(u,v)\,dv
	=
	\mathbb E\!\left[A_{ij}\mid U_i=u\right],
	\]
	which is the first conditional moment of the connectivity profile of a node with latent
	position \(u\). In particular, if the graphon admits a representative with a strictly
	monotone degree function, then expected degrees induce a canonical ordering of the latent
	space.
	
	\begin{condition}[Strict monotonicity of degree]
		\label{cond:degree-monotonicity}
		A graphon \(W\) is said to satisfy the strict degree monotonicity condition if and only if
		there exists a representative \(W^{\mathrm{can}}\) in the weak equivalence class of \(W\)
		such that
		\[
		g^{\mathrm{can}}(u)
		=
		\int_0^1 W^{\mathrm{can}}(u,v)\,dv
		\]
		is strictly increasing (or strictly decreasing) on \([0,1]\).
	\end{condition}
	Under this condition, latent positions are identifiable through their expected degrees, and the degree function provides a one-dimensional coordinate system on the latent space.
	This idea has been used in several graphon estimation methods
	\citep{Chan2014,Yang2014,Olhede2014,Wolfe2013,Sogan2026}
	and is explicitly presented as a way to define a canonical graphon representation
	\citep{Yang2014,Chan2014}.
	
	However, strict monotonicity of degree is a restrictive assumption. It excludes graphons
	in which distinct latent positions have the same expected degree but different
	connectivity profiles. In such cases, the degree function cannot separate all latent
	positions, even though the corresponding graphon may still be twin-free. This limitation
	motivates replacing the scalar degree coordinate by richer local summaries of the
	connectivity profile.

	\section{Rooted local signatures and motif-based indistinguishability}
	
	The expected degree only captures first-order connectivity information. To obtain a
	richer description of a latent position \(u\), we consider conditional subgraph
	densities rooted at \(u\). These quantities generalize the degree function by measuring
	the expected occurrence of local motifs around a node located at \(u\). They can be
	viewed as higher-order conditional moments of the connectivity profile
	\(W(u,\cdot)\).
	
	\begin{definition}[Rooted motif density]
		Let \((F,r)\) be a rooted graph, where \(F=(V(F),E(F))\) is a finite simple graph and
		\(r\in V(F)\) is a distinguished node called the root. For \(u\in[0,1]\), the rooted motif density of \((F,r)\) at \(u\) is
		\[
		t((F,r),W)(u)
		=
		\int_{[0,1]^{|V(F)|-1}}
		\prod_{\{a,b\}\in E(F)}
		W(u_a,u_b)
		\prod_{v\in V(F)\setminus\{r\}}du_v,
		\]
		where \(u_r=u\).
	\end{definition}
	The quantity \(t((F,r),W)(u)\) has a natural probabilistic interpretation. Fix the root node at latent position \(u\), sample the remaining \(|V(F)|-1\) latent positions independently from the uniform distribution on \([0,1]\), and generate edges according to the graphon \(W\). Then \(t((F,r),W)(u)\) is the probability that all edges of \(F\) are present among these nodes, with the root node fixed at \(u\). It is therefore a conditional subgraph density attached to the latent
	position \(u\). The degree function is recovered as the simplest rooted motif density. If
	\(K_2^\bullet\) denotes the rooted edge, then
	\[
	t(K_2^\bullet,W)(u)
	=
	\int_0^1 W(u,v)\,dv
	=
	g(u).
	\]
	Similarly, if \(P_3^\bullet\) denotes the path of length two rooted at one extremity,
	then
	\[
	t(P_3^\bullet,W)(u)
	=
	\int_{[0,1]^2}
	W(u,v)W(v,w)\,dv\,dw.
	\]
	This quantity measures the expected number of two-step connections emanating from a node
	located at \(u\).
	
	\subsection{Rooted motif signatures}
	
	The rooted motif densities introduced above can be collected into a single descriptor of
	the latent position \(u\). Since they summarize the occurrence of all finite rooted
	motifs around \(u\), they provide a much richer characterization than the expected
	degree alone.
	
	Let \(\mathcal F_\infty\) denote the collection of all finite rooted motifs, considered up to
	rooted isomorphism.
	
	\begin{definition}[Rooted motif signature]
		The rooted motif signature of a latent position \(u\in[0,1]\) is the collection
		\[
		\Phi_W(u)
		=
		\bigl(
		t((F,r),W)(u)
		\bigr)_{(F,r)\in\mathcal F_\infty}.
		\]
	\end{definition}
	
	The signature \(\Phi_W(u)\) records all rooted motif densities around \(u\). In
	particular, the degree function appears as the rooted edge
	motif,
	\(
	t(K_2^\bullet,W)(u)
	=
	g(u).
	\)
	The signature therefore extends degree-based representations by incorporating
	higher-order local structure. Two latent positions having the same signature are indistinguishable through rooted motif
	statistics. This naturally induces a partition of the latent space into classes of motif-indistinguishable positions.
	In practice, one cannot estimate infinitely many motif densities. It is therefore
	natural to restrict attention to a finite collection of motifs. Let
	\(
	\mathcal F
	=
	\{(F_1,r_1),\ldots,(F_K,r_K)\}
	\)
	be a finite family of rooted motifs. The associated \(\mathcal F\)-signature is
	\[
	\Phi_W^{\mathcal F}(u)
	=
	\Bigl(
	t((F_1,r_1),W)(u),
	\ldots,
	t((F_K,r_K),W)(u)
	\Bigr).
	\]
	For instance, choosing
	\[
	\mathcal F
	=
	\{K_2^\bullet,\; K_3^\bullet,\; P_3^\bullet,\; C_4^\bullet\},
	\]
	where \(K_2^\bullet\) is the rooted edge, \(K_3^\bullet\) is the rooted triangle,
	\(P_3^\bullet\) is the rooted path of length two, and \(C_4^\bullet\) is the rooted
	four-cycle. The corresponding signature contains
	\[
	\Phi_W^{\mathcal F}(u)
	=
	\bigl(
	g(u),
	g_{\triangle}(u),
	g_{P_3}(u),
	g_{C_4}(u)
	\bigr),
	\]
	with, for example,
	\[
	g_{\triangle}(u)
	=
	\int_{[0,1]^2}
	W(u,v)W(u,w)W(v,w)\,dv\,dw.
	\]
	\begin{definition}[Signature-based latent-position identifiability]
		A graphon \(W\) is said to be \emph{signature-identifiable} if the rooted motif
		signature map
		\(
		u \mapsto \Phi_W(u)
		\)
		is injective almost everywhere.
	\end{definition}
	This definition is a latent-position identifiability condition within a given
	representative of the graphon. It says that two latent positions cannot have the same rooted motif signature, except possibly on a set of measure zero.
	Similarly, for a finite family \(\mathcal F\) of rooted motifs, we say that \(W\) is \(\mathcal F\)-signature identifiable if the finite-dimensional map
	\(
	u\mapsto \Phi_W^{\mathcal F}(u)
	\) is injective almost everywhere. The degree function is recovered as the simplest rooted
	motif coordinate. Indeed, when
	\(
	\mathcal F=\{K_2^\bullet\},
	\)
	\(\mathcal F\)-signature identifiability coincides with almost-everywhere injectivity of the degree function. Thus, degree-based identifiability appears as the one-dimensional case of the rooted motif framework. More generally, if the degree function is injective and \(K_2^\bullet\in\mathcal F\), then the signature map \(u\mapsto\Phi_W^{\mathcal F}(u)\) is injective. The converse does not necessarily hold: rooted motif signatures may distinguish latent positions that have the same expected degree. The previous definition only requires separation of latent positions. It does not require the signature coordinates to induce an ordered representative. To formulate an analogue
	of strict degree monotonicity, one may impose the following stronger condition.
	
	\begin{definition}[Strictly ordered signature representation]
		Let \(\mathcal F\) be a finite family of rooted motifs. We say that a graphon \(W\)
		admits a strictly \(\mathcal F\)-signature ordered representation if there exists a
		representative \(W^{\mathrm{can}}\) in the weak equivalence class of \(W\) such that,
		for almost every \(u<v\),
		\[
		t((F,r),W^{\mathrm{can}})(u)
		\le
		t((F,r),W^{\mathrm{can}})(v)
		\qquad
		\text{for all } (F,r)\in\mathcal F,
		\]
		and there exists at least one motif \((F,r)\in\mathcal F\) such that
		\[
		t((F,r),W^{\mathrm{can}})(u)
		<
		t((F,r),W^{\mathrm{can}})(v).
		\]
	\end{definition}
	This condition is stronger than \(\mathcal F\)-signature identifiability. Signature
	identifiability only requires the map
	\[
	u\mapsto \Phi_W^{\mathcal F}(u)
	\]
	to separate latent positions almost everywhere. In contrast, the strictly ordered
	condition requires the existence of a representative in which all motif coordinates are
	nondecreasing along the latent order, with at least one coordinate strictly increasing
	for almost every pair \(u<v\). Hence, not every \(\mathcal F\)-signature identifiable
	graphon admits such a representation. The monotone representation should therefore be
	viewed as one possible way to select an ordered representative, analogous to strict
	degree monotonicity. Other choices are possible, for instance by ordering latent
	positions lexicographically according to their motif signatures.
	When \(\mathcal F=\{K_2^\bullet\}\), this condition reduces to strict monotonicity of
	the degree function.

	\subsection{Finite-rank graphons}
	
	The previous definitions apply to arbitrary graphons. In general, however, it is not clear whether equality of rooted motif signatures implies equality of connectivity profiles. We therefore consider a class of graphons for which the latent structure can be analyzed explicitly. We now specialize to finite-rank graphons. Such graphons admit finite spectral representations through the associated graphon operator and have been studied in several contexts including graphon estimation, graphon centrality, and graphon control	\citep{Wolfe2013,AvellaMedina2018,Gao2019}.
	A graphon \(W\) is said to have finite rank \(m\) if the associated integral operator
	\begin{equation}
		(T_Wf)(u)
		=
		\int_0^1 W(u,v)f(v)\,dv
	\end{equation}
	has rank \(m\). Equivalently, \(W\) admits a spectral decomposition
	\begin{equation}
		W(u,v)
		=
		\sum_{j=1}^{m}
		\lambda_j \phi_j(u)\phi_j(v),
	\end{equation}
	where \(\lambda_1,\ldots,\lambda_m\) are the nonzero eigenvalues of \(T_W\), and
	\(\phi_1,\ldots,\phi_m\) form an orthonormal family in \(L^2([0,1])\).
	For finite-rank graphons, the connectivity profile of a latent position \(u\) is completely determined by the spectral coordinates
	\(
	\Theta(u)
	=
	\bigl(
	\phi_1(u),\ldots,\phi_m(u)
	\bigr).
	\)
	Indeed,
	\(
	W(u,\cdot)
	=
	\sum_{j=1}^{m}
	\lambda_j \phi_j(u)\phi_j(\cdot),
	\)
	so that
	\(
	W(u,\cdot)=W(v,\cdot)
	\)
	if and only if
	\(
	\Theta(u)=\Theta(v).
	\)
	\begin{proposition}\label{prop:finite-rank-path}
		Let \(W\) be a finite-rank graphon of rank \(m\). If
		\(
		\lambda_i\neq\lambda_j,
		\, (i\neq j),
		\)
		and
		\(
		\langle 1,\phi_j\rangle\neq 0\), for \(j=1,\ldots,m,
		\) then the rooted path densities
		\begin{equation}
			t(P_k^\bullet,W)(u),
			\qquad
			k=2,\ldots,m+1,
		\end{equation}
		determine the spectral coordinates \(\Theta(u)\).
	\end{proposition}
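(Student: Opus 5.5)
We first compute the rooted path densities explicitly in spectral coordinates. Here \(P_k^\bullet\) is the path on \(k\) vertices rooted at one extremity, so \(t(P_k^\bullet,W)(u)\) has \(k-1\) edges. Integrating the free vertices from the far end towards the root, the density is obtained by applying the integral operator \(T_W\) repeatedly to the constant function \(1\):
\[
t(P_k^\bullet,W)(u)=\bigl(T_W^{\,k-1}1\bigr)(u).
\]
Since \(T_W\phi_j=\lambda_j\phi_j\) and \(T_W\) annihilates the orthogonal complement of \(\mathrm{span}(\phi_1,\ldots,\phi_m)\), we have \(T_W 1=\sum_j \lambda_j c_j\phi_j\), where \(c_j=\langle 1,\phi_j\rangle\). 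Hence, for \(k\ge 2\),
\[
t(P_k^\bullet,W)(u)=\sum_{j=1}^{m}\lambda_j^{\,k-1}c_j\,\phi_j(u).
\]
We will check this identity by induction on \(k\), since each additional edge contributes one factor of \(T_W\).

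Collecting the indices \(k=2,\ldots,m+1\) gives the linear system
\[
\bigl(t(P_{k}^\bullet,W)(u)\bigr)_{k=2}^{m+1}=V\,D\,\Theta(u),
\]
with the following ingredients:
\begin{itemize}
\item \(V\) is the \(m\times m\) matrix with entries \(V_{kj}=\lambda_j^{\,k-1}\), for \(k=2,\ldots,m+1\) and \(j=1,\ldots,m\);
\item \(D=\mathrm{diag}(c_1,\ldots,c_m)\).
\end{itemize}
The matrix \(V\) factors as \(V_0\,\mathrm{diag}(\lambda_1,\ldots,\lambda_m)\), where \(V_0\) is the Vandermonde matrix with entries \((\lambda_j^{\,i})\), \(i=0,\ldots,m-1\). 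We then argue that the system is invertible:
\begin{itemize}
\item \(\det V_0=\prod_{i<j}(\lambda_j-\lambda_i)\neq 0\), because the eigenvalues are distinct;
\item each \(\lambda_j\) is nonzero, since these are the nonzero eigenvalues of \(T_W\);
\item each \(c_j\) is nonzero by hypothesis.
\end{itemize}
So \(VD\) is invertible, and \(\Theta(u)=(VD)^{-1}\bigl(t(P_k^\bullet,W)(u)\bigr)_{k=2}^{m+1}\). The matrix \((VD)^{-1}\) depends only on \(W\), not on \(u\), so the path densities determine \(\Theta(u)\). In particular, equal path densities at \(u\) and \(v\) force \(\Theta(u)=\Theta(v)\), hence \(W(u,\cdot)=W(v,\cdot)\).

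The main obstacle is making the pointwise statements rigorous, because the \(\phi_j\) are only defined almost everywhere. I would fix versions of the \(\phi_j\) and define \(W\) through its spectral expansion. With that choice, \(T_W^{k-1}1\) equals the displayed sum for every \(u\), not just almost every \(u\). Writing the innermost integral as \(T_W 1\), and not as an integral of \(1\) against a non-eigenfunction, is exactly what keeps the induction clean.
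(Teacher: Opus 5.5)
Your proposal is correct and follows the paper's proof essentially step for step. Both write $t(P_k^\bullet,W)(u)=(T_W^{k-1}\mathbf 1)(u)=\sum_{j}\lambda_j^{k-1}c_j\phi_j(u)$ and invert the resulting linear system using the nonsingular Vandermonde-times-diagonal matrix; your factorization $V=V_0\,\mathrm{diag}(\lambda_1,\ldots,\lambda_m)$ is just an explicit form of the paper's determinant formula $\det V=\bigl(\prod_j\lambda_j\bigr)\prod_{i<j}(\lambda_j-\lambda_i)$, and your remark about fixing versions of the $\phi_j$ so the identities hold for every $u$ is a small rigor point the paper leaves implicit.
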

	\begin{remark}
		Proposition~\ref{prop:finite-rank-path} gives an explicit sufficient family of rooted
		motifs for generic finite-rank graphons. It shows that the rooted path densities
		\(t(P_k^\bullet,W)(u)\), \(k=2,\ldots,m+1\), determine the spectral coordinates
		\(\Theta(u)\), and hence the connectivity profile \(W(u,\cdot)\). Therefore, any rooted
		motif signature containing these path coordinates identifies twin classes in this
		setting. This result should not be interpreted as saying that rooted paths are the only
		informative motifs. They are used here because their relation to the spectral
		decomposition is explicit. Other rooted motifs may also distinguish latent positions, but
		the proposition only establishes the sufficiency of rooted paths.
	\end{remark}
	A natural question is whether an analogous identifiability phenomenon can hold beyond finite-rank graphons.
	More precisely, one may ask whether
	\begin{equation}
		\Phi_W(u)=\Phi_W(v)
		\qquad\Longrightarrow\qquad
		W(u,\cdot)=W(v,\cdot)
		\quad\text{a.e.}
	\end{equation}
	For arbitrary graphons, this implication is false. The obstruction comes from internal symmetries of the graphon. Suppose that there exists a measure-preserving transformation \(\tau:[0,1]\to[0,1]\) such that
	\(
	W(\tau(x),\tau(y))=W(x,y)
	\,\text{for a.e. }(x,y)\in[0,1]^2.
	\)
	Then, by applying the change of variables \(z\mapsto \tau(z)\) in the definition of each
	rooted motif density, one obtains
	\(
	t((F,r),W)(\tau(u))
	=
	t((F,r),W)(u)
	\)
	for every rooted motif \((F,r)\) and for almost every \(u\). Hence
	\(
	\Phi_W(\tau(u))=\Phi_W(u)
	\,\text{for a.e. }u.
	\)
	This phenomenon occurs even for twin-free graphons. For instance, consider
	\(
	W(u,v)=|u-v|.
	\)
	The map
	\(
	\tau(u)=1-u
	\)
	is measure-preserving and satisfies
	\(
	W(\tau(u),\tau(v))=W(u,v)
	\,\text{for all }u,v\in[0,1].
	\)
	Therefore,
	\[
	\Phi_W(u)=\Phi_W(1-u)
	\qquad\text{for all }u\in[0,1].
	\]
	However, if
	\(
	|u-\cdot|=|v-\cdot|
	\,\text{a.e.},
	\)
	then necessarily \(u=v\), so \(W\) is twin-free. Thus twin-freeness alone does not imply signature-based latent-position identifiability.
	This example shows that the full rooted motif signature cannot, in general, identify latent positions beyond the symmetries of the graphon. This suggests the following natural question: if
	\(
	\Phi_W(u)=\Phi_W(v),
	\) must \(u\) and \(v\) be related by a measure-preserving transformation \(\tau\) satisfying
	\(
	W(\tau(x),\tau(y))=W(x,y)
	\,\text{for a.e. }(x,y),
	\)
	with
	\(
	\tau(u)=v?
	\)
	Equivalently, do rooted motif signatures identify latent positions up to the internal
	measure-preserving symmetries of \(W\)? We leave this question open.

	\section{Empirical rooted motif signatures}

	Degree-based graphon estimators use empirical degrees to recover a latent ordering under
	strict degree monotonicity \citep{Yang2014,Chan2014}. We extend this idea by replacing
	the scalar degree coordinate with a finite vector of empirical rooted motif densities, which can distinguish latent positions that have identical expected degree. 
	
	Let
	\(
	\mathcal F
	=
	\{(F_1,r_1),\ldots,(F_K,r_K)\}
	\)
	be a finite family of rooted motifs.  Suppose that we observe the adjacency matrix
	\(
	A=(A_{ij})_{1\leq i,j\leq n}
	\)
	of an undirected graph generated from the graphon \(W\). Throughout this section, the motif family \(\mathcal F\) is fixed, and we assume that
	\(n\) is large enough so that
	\(
	|V(F)|\leq n,
	\,
	(F,r)\in\mathcal F.
	\) For each node \(i\), define the
	empirical \(\mathcal F\)-signature by
	\begin{equation}
		\widehat{\Phi}^{\mathcal F}_i
		=
		\Bigl(
		\widehat t_i(F_1),
		\ldots,
		\widehat t_i(F_K)
		\Bigr),
	\end{equation}
	where \(\widehat t_i(F)\) denotes the empirical rooted motif density of \(F\) centered at
	node \(i\). More precisely, if \((F,r)\) is a rooted motif, then
	\begin{equation}
		\widehat t_i(F)
		=
		\frac{1}{(n-1)_{|V(F)|-1}}
		\sum_{\substack{\psi:V(F)\setminus\{r\}\hookrightarrow [n]\setminus\{i\}}}
		\prod_{\{a,b\}\in E(F)}
		A_{\psi_i(a)\psi_i(b)},
	\end{equation}
	where \((n-1)_k=(n-1)(n-2)\cdots(n-k)\), the sum is over injective maps, and
	\(
	\psi_i(r)=i\),
	\(
	\psi_i(a)=\psi(a)\), for \(a\neq r\).
	For example, the empirical rooted edge, triangle, path, and four-cycle densities are
	given by
	\begin{align}
		\widehat g_i
		&=
		\frac{1}{n-1}
		\sum_{j\neq i}
		A_{ij},\\
		\widehat g_{\triangle,i}
		&=
		\frac{1}{(n-1)(n-2)}
		\sum_{\substack{j,k\neq i\\ j\neq k}}
		A_{ij}A_{ik}A_{jk},\\
		\widehat g_{P_3,i}
		&=
		\frac{1}{(n-1)(n-2)}
		\sum_{\substack{j,k\neq i\\ j\neq k}}
		A_{ij}A_{jk},\\
		\widehat g_{C_4,i}
		&=
		\frac{1}{(n-1)(n-2)(n-3)}
		\sum_{\substack{j,k,\ell\neq i\\ j,k,\ell\ \mathrm{distinct}}}
		A_{ij}A_{jk}A_{k\ell}A_{\ell i}.
	\end{align}
	
	Given nonnegative weights
	\(
	w_1,\ldots,w_K
	\),
	the empirical signatures induce the node-level dissimilarity
	\begin{align}
		\widehat d_{\mathcal F,w}(i,j)
		&=
		\left(
		\sum_{k=1}^K
		w_k
		\left[
		\widehat t_i(F_k)
		-
		\widehat t_j(F_k)
		\right]^2
		\right)^{1/2}.
	\end{align}
	At the population level, the corresponding pseudometric is
	\begin{align}
		d_{\mathcal F,w}(u,v)
		&=
		\left(
		\sum_{k=1}^K
		w_k
		\left[
		t((F_k,r_k),W)(u)
		-
		t((F_k,r_k),W)(v)
		\right]^2
		\right)^{1/2}.
	\end{align}
	The weights allow different motif coordinates to contribute at different scales. The
	unweighted Euclidean distance corresponds to \(w_k=1\) for all \(k\). In practice, the
	weights may be chosen to standardize the motif coordinates, for instance using inverse
	empirical variances, or to penalize larger motifs. A simple size-based choice is
	\(
	w_k=|E(F_k)|^{-\alpha},
	\,\alpha\in[0,1].
	\)
	When \(\alpha\) is close to \(0\), all motifs contribute nearly equally. When
	\(\alpha\) is close to \(1\), larger motifs receive smaller weights, which reduces the
	influence of more complex and typically more variable motif counts. By construction,
	\[
	d_{\mathcal F}(u,v)=0
	\quad\Longleftrightarrow\quad
	\Phi_W^{\mathcal F}(u)=\Phi_W^{\mathcal F}(v).
	\]
	Hence \(d_{\mathcal F}\) becomes a metric on the quotient space induced by \(\mathcal F\)-motif indistinguishability. Under \(\mathcal F\)-signature identifiability, it defines a metric on the latent space up to null sets. This metric endows the latent space with a geometry induced by rooted motif statistics. Nodes that are close in this geometry have similar motif profiles and therefore similar
	local structural roles in the network. Such distances can be used to construct motif-based neighborhoods, perform clustering,
	learn low-dimensional representations of the latent space, and develop graphon estimation procedures.

	The following result shows that empirical rooted motif signatures consistently estimate
	their population counterparts.
	
	\begin{theorem}[Uniform concentration of empirical rooted signatures]
		\label{thm:empirical-signature-concentration}
		Let
		\[
		\mathcal F=\{(F_1,r_1),\ldots,(F_K,r_K)\}
		\]
		be a fixed finite family of rooted motifs, and set
		\(
		s_{\max}=\max_{1\leq k\leq K}|V(F_k)|.
		\)
		Assume that \(n\geq s_{\max}\). Then there exists a constant \(C>0\), depending only on
		\(\mathcal F\), such that for every \(\delta\in(0,1)\), with probability at least
		\(1-\delta\),
		\begin{equation}
			\max_{1\leq i\leq n}
			\left\|
			\widehat\Phi_i^{\mathcal F}
			-
			\Phi_W^{\mathcal F}(U_i)
			\right\|_2
			\leq
			C
			\sqrt{
				\frac{\log(nK)+\log(1/\delta)}
				{n}
			}.
		\end{equation}
		In particular, if \(K\) is fixed, then
		\begin{equation}
			\max_{1\leq i\leq n}
			\left\|
			\widehat\Phi_i^{\mathcal F}
			-
			\Phi_W^{\mathcal F}(U_i)
			\right\|_2
			=
			O_{\mathbb P}
			\left(
			\sqrt{\frac{\log n}{n}}
			\right).
		\end{equation}
	\end{theorem}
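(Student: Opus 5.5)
The plan is to work coordinatewise and then take a union bound. Fix a motif $(F,r)\in\mathcal F$ with $s=|V(F)|$ and a node $i$, and condition on $U_i=u$. The structure of $\widehat t_i(F)$ is what drives the argument.

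First, $\widehat t_i(F)$ is an average over injective maps $\psi:V(F)\setminus\{r\}\hookrightarrow[n]\setminus\{i\}$ of the products $\prod_{\{a,b\}\in E(F)}A_{\psi_i(a)\psi_i(b)}$. Because $\psi$ is injective and $A$ has zero diagonal, each product involves distinct off-diagonal pairs.

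Second, conditionally on $U_i=u$, the variables $U_j$, $j\neq i$, are i.i.d. uniform, and the edges are conditionally independent Bernoulli variables. Integrating out the edges and then the $U_j$'s gives
\[
\mathbb E\bigl[\widehat t_i(F)\mid U_i=u\bigr]=t((F,r),W)(u).
\]
This is exact unbiasedness, so no bias term appears.

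For concentration, I would represent the whole graph as a function of independent inputs: the latent variables $U_1,\ldots,U_n$ and i.i.d. uniforms $\xi_{jk}$, $j<k$, with $A_{jk}=\mathbf 1\{\xi_{jk}\le W(U_j,U_k)\}$. Conditionally on $U_i=u$, I would then apply McDiarmid's bounded-differences inequality, but grouping variables by vertex to avoid the $\binom n2$ edge variables giving a poor bound. For each $j\neq i$, define the vertex block
\[
Z_j=\bigl(U_j,(\xi_{jk})_{k<j},\ \xi_{ij}\bigr).
\]
Each edge variable $\xi_{jk}$ with $j,k\neq i$ goes to the block of the larger index, and $\xi_{ij}$ goes to $Z_j$. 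These $n-1$ blocks are independent, and $\widehat t_i(F)$ is a function of them.

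Changing the single block $Z_j$ only affects terms where $j$ lies in the image of $\psi$. The fraction of such maps among all $(n-1)_{s-1}$ maps is at most $(s-1)/(n-1)$, and each product lies in $[0,1]$. So the bounded-difference constants are $c_j\le (s-1)/(n-1)$. McDiarmid then gives
\[
\mathbb P\bigl(|\widehat t_i(F)-t((F,r),W)(U_i)|>\varepsilon\mid U_i\bigr)\le 2\exp\!\bigl(-2\varepsilon^2(n-1)/(s-1)^2\bigr).
\]
Integrating over $U_i$ removes the conditioning.

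The final step is a union bound over the $n$ nodes and the $K$ motifs. Setting each failure probability to $\delta/(nK)$ and solving for $\varepsilon$ gives a coordinatewise deviation of order
\[
(s_{\max}-1)\sqrt{\frac{\log(2nK/\delta)}{2(n-1)}}.
\]
Passing to the $\ell_2$ norm costs a factor $\sqrt K$. Since $K$ is fixed by $\mathcal F$, this factor is absorbed into $C$, along with $s_{\max}$, the constant $\log 2$ (using $\log(2nK)\le 2\log(nK)$ when $nK\ge2$, and trivially adjusting otherwise), and $n/(n-1)\le 2$. The $O_{\mathbb P}(\sqrt{\log n/n})$ statement follows by taking $\delta$ fixed.

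I expect the main obstacle to be setting up the bounded-difference structure correctly. One has to check that reassigning edge variables to vertex blocks keeps the blocks independent, and that modifying $Z_j$, including the edges $\xi_{jk}$ with $k<j$ stored in that block, only changes terms whose image contains $j$ or $k$. This second point is delicate. A term containing $k$ but not $j$ does not involve the edge $jk$, so it is unaffected. Terms containing both $j$ and $k$ are already counted among those containing $j$. Hence the $O(1/n)$ influence bound per block holds.
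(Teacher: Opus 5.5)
Your proof is correct, but it follows a different route from the paper's.

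\textbf{The paper's route.} It splits the error into two parts. The first is an edge-sampling term $\widehat t_i(F)-\bar t_i(F)$, where $\bar t_i(F)=\mathbb E[\widehat t_i(F)\mid\mathcal U]$. This is controlled by McDiarmid over the individual edge indicators, conditionally on all latent positions, with influence $O(1/n)$ for the $n-1$ root-incident edges and $O(1/n^2)$ for the others. The second is a latent-sampling term $\bar t_i(F)-t((F,r),W)(U_i)$. This is treated as a bounded $U$-statistic of order $|V(F)|-1$ in $\{U_j\}_{j\neq i}$ via Hoeffding-type inequalities. The two tail bounds are then added.

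\textbf{Your route.} You realize the graph through the coupling $A_{jk}=\mathbf 1\{\xi_{jk}\le W(U_j,U_k)\}$ and use exact unbiasedness given $U_i$. You then apply a single bounded-differences inequality to independent vertex blocks. The influence bound $(s-1)/(n-1)$ is right, since every adjacency entry touched by $Z_j$ is incident to $j$.

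\textbf{Comparison.} Your argument is more self-contained: one inequality, no appeal to $U$-statistic concentration, and an explicit constant of order $\sqrt K\,(s_{\max}-1)$. The paper's decomposition keeps the two sources of randomness separate. Its edge-sampling bound therefore holds uniformly over any fixed latent configuration, and it shows that both sources contribute at the same $n^{-1/2}$ rate.

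\textbf{A small correction to your motivation.} Grouping by vertex is convenient but not necessary. Suppose you take $U_1,\ldots,U_n$ and every $\xi_{jk}$ as separate coordinates. A non-root edge variable affects only a fraction $O(n^{-2})$ of the embeddings, so the $\binom{n-1}{2}$ such coordinates add only $O(n^{-2})$ to $\sum c^2$. The total stays $O(1/n)$, which is exactly the computation in the paper's edge-sampling step.
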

	Theorem~\ref{thm:empirical-signature-concentration} shows that, uniformly over the
	observed nodes, empirical rooted motif signatures recover their population counterparts.
	Thus, for a fixed motif family \(\mathcal F\), the vectors
	\(\widehat\Phi_i^{\mathcal F}\) provide consistent data-driven coordinates for the
	latent motif representation. Since many downstream procedures depend on pairwise comparisons rather than on the
	individual signatures themselves, we next derive a uniform consistency result for the
	induced motif distances.
	
	\begin{corollary}[Uniform consistency of signature distances]
		\label{cor:signature-distance}
		Under the assumptions of Theorem~\ref{thm:empirical-signature-concentration},
		there exists a constant \(C>0\), depending only on \(\mathcal F\), such that for every \(\delta\in(0,1)\),
		with probability at least \(1-\delta\),
		\[
		\max_{1\le i,j\le n}
		\left|
		\widehat d_{\mathcal F,w}(i,j)
		-
		d_{\mathcal F,w}(U_i,U_j)
		\right|
		\le
		C
		\left(
		\sum_{k=1}^K w_k
		\right)^{1/2}
		\sqrt{
			\frac{\log(nK)+\log(1/\delta)}
			{n}
		}.
		\]
		In particular, if
		\(
		\sum_{k=1}^K w_k =1,
		\)
		then
		\[
		\max_{1\le i,j\le n}
		\left|
		\widehat d_{\mathcal F,w}(i,j)
		-
		d_{\mathcal F,w}(U_i,U_j)
		\right|
		=
		O_{\mathbb P}
		\!\left(
		\sqrt{\frac{\log(nK)}{n}}
		\right).
		\]
	\end{corollary}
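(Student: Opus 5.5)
The corollary should follow deterministically from Theorem~\ref{thm:empirical-signature-concentration}. The idea is to interpret both distances as weighted Euclidean norms and apply the reverse triangle inequality. Write \(D_w=\mathrm{diag}(w_1,\ldots,w_K)^{1/2}\) and \(\|x\|_w=\|D_w x\|_2\). This is a seminorm on \(\mathbb R^K\), since the weights are nonnegative. With this notation,
\[
\widehat d_{\mathcal F,w}(i,j)=\bigl\|\widehat\Phi_i^{\mathcal F}-\widehat\Phi_j^{\mathcal F}\bigr\|_w,
\qquad
d_{\mathcal F,w}(U_i,U_j)=\bigl\|\Phi_W^{\mathcal F}(U_i)-\Phi_W^{\mathcal F}(U_j)\bigr\|_w .
\]

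The reverse triangle inequality for the seminorm \(\|\cdot\|_w\), followed by the ordinary triangle inequality, gives
\[
\bigl|\widehat d_{\mathcal F,w}(i,j)-d_{\mathcal F,w}(U_i,U_j)\bigr|
\le
\bigl\|\widehat\Phi_i^{\mathcal F}-\Phi_W^{\mathcal F}(U_i)\bigr\|_w
+\bigl\|\widehat\Phi_j^{\mathcal F}-\Phi_W^{\mathcal F}(U_j)\bigr\|_w .
\]
Next I compare the weighted norm with the Euclidean norm. The bound \(\|x\|_w\le(\max_k w_k)^{1/2}\|x\|_2\le(\sum_k w_k)^{1/2}\|x\|_2\) holds because \(\sum_k w_k x_k^2\le \max_k w_k\sum_k x_k^2\) and \(\max_k w_k\le\sum_k w_k\).

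Now place everything on the event of probability at least \(1-\delta\) supplied by Theorem~\ref{thm:empirical-signature-concentration}. On that event each of the two terms is at most \((\sum_k w_k)^{1/2}C_0\sqrt{(\log(nK)+\log(1/\delta))/n}\), where \(C_0\) is the constant of the theorem. Taking the maximum over \(i,j\) yields the stated bound with \(C=2C_0\), which depends only on \(\mathcal F\). Because the event does not depend on the pair \((i,j)\), no additional union bound is needed.

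For the \(O_{\mathbb P}\) statement, set \(\sum_k w_k=1\). Given \(\varepsilon>0\), choose \(\delta=\varepsilon\). The bound becomes \(C\sqrt{(\log(nK)+\log(1/\varepsilon))/n}\le C_\varepsilon\sqrt{\log(nK)/n}\), where \(C_\varepsilon=C\sqrt{1+\log(1/\varepsilon)/\log 2}\). This uses \(\log(nK)\ge\log 2\) whenever \(nK\ge2\); the degenerate case \(nK=1\) is trivial.

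There is no real obstacle in this argument. The only point that needs care is that the zero weights and the unequal weights are handled by the seminorm bound rather than by a genuine norm equivalence, and this is exactly where the factor \((\sum_k w_k)^{1/2}\) comes from.
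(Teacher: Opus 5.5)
Your proposal is correct and is essentially the paper's own argument. Like the paper, you write both distances as weighted seminorms, apply the reverse triangle inequality to split the error into two per-node terms, and invoke the theorem's single uniform event to get $C=2C_0$. The only cosmetic difference is that you bound $\|x\|_w\le(\max_k w_k)^{1/2}\|x\|_2$ and use the theorem's stated $\ell_2$ bound directly, whereas the paper bounds $\|x\|_w\le(\sum_k w_k)^{1/2}\max_k|x_k|$ and uses the coordinatewise bound. You also spell out the $O_{\mathbb P}$ step, which the paper leaves implicit.
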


	\begin{figure}[htbp]
		\centering
		
		\begin{subfigure}{0.23\textwidth}
			\centering
			\includegraphics[width=\linewidth]{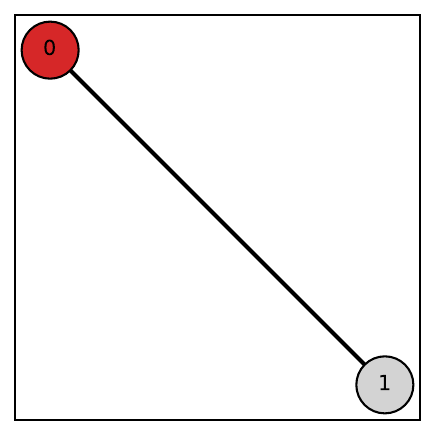}
			\caption{\(K_2^\bullet\)}
		\end{subfigure}
		\hfill
		\begin{subfigure}{0.23\textwidth}
			\centering
			\includegraphics[width=\linewidth]{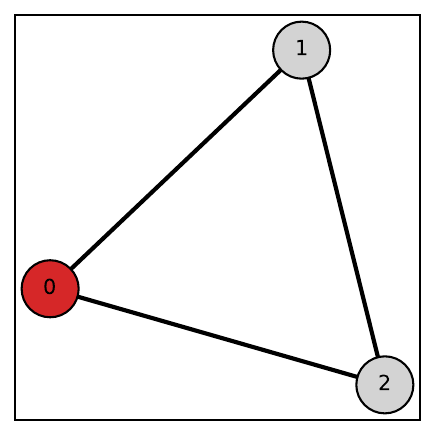}
			\caption{\(K_3^\bullet\)}
		\end{subfigure}
		\hfill
		\begin{subfigure}{0.23\textwidth}
			\centering
			\includegraphics[width=\linewidth]{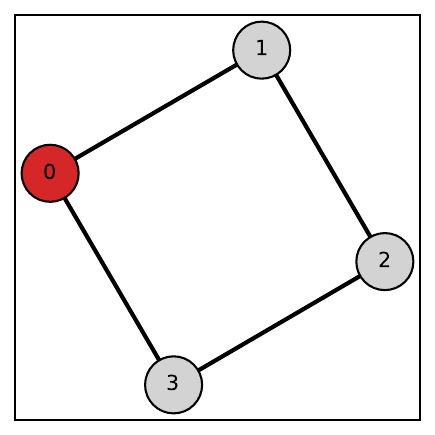}
			\caption{\(C_4^\bullet\)}
		\end{subfigure}
		\hfill
		\begin{subfigure}{0.23\textwidth}
			\centering
			\includegraphics[width=\linewidth]{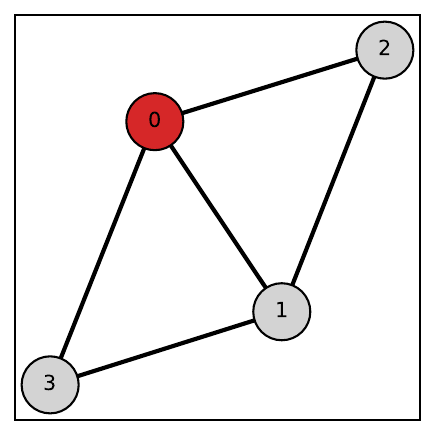}
			\caption{\(D^\bullet\)}
		\end{subfigure}
		
		\vspace{0.4cm}
		
		\begin{subfigure}{0.23\textwidth}
			\centering
			\includegraphics[width=\linewidth]{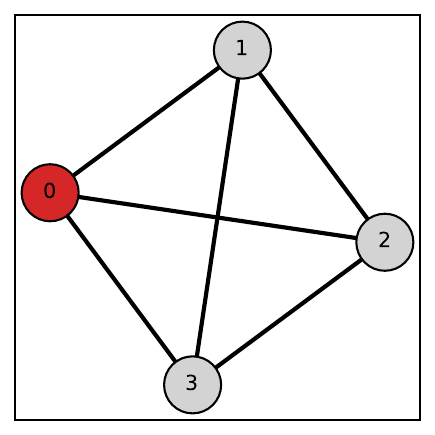}
			\caption{\(K_4^\bullet\)}
		\end{subfigure}
		\hfill
		\begin{subfigure}{0.23\textwidth}
			\centering
			\includegraphics[width=\linewidth]{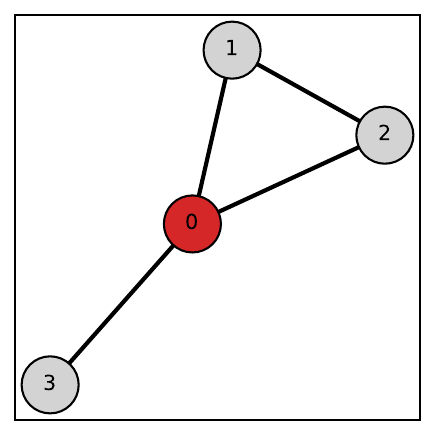}
			\caption{\(\mathrm{Paw}^\bullet\)}
		\end{subfigure}
		\hfill
		\begin{subfigure}{0.23\textwidth}
			\centering
			\includegraphics[width=\linewidth]{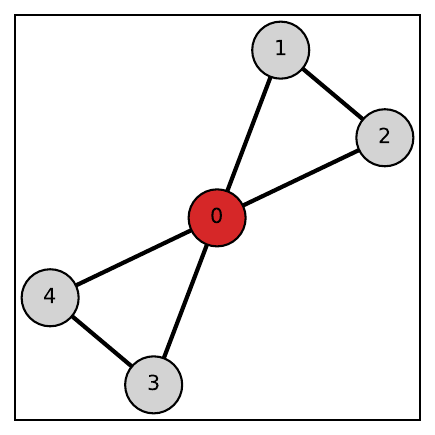}
			\caption{\(\mathrm{Butterfly}^\bullet\)}
		\end{subfigure}
		\hfill
		\begin{subfigure}{0.23\textwidth}
			\centering
			\includegraphics[width=\linewidth]{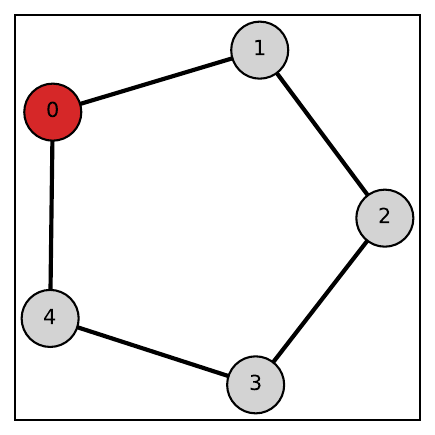}
			\caption{\(C_5^\bullet\)}
		\end{subfigure}
		
		\vspace{0.4cm}
		
		\begin{subfigure}{0.23\textwidth}
			\centering
			\includegraphics[width=\linewidth]{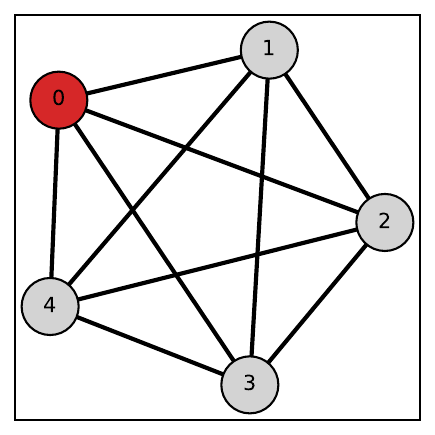}
			\caption{\(K_5^\bullet\)}
		\end{subfigure}
		\hfill
		\begin{subfigure}{0.23\textwidth}
			\centering
			\includegraphics[width=\linewidth]{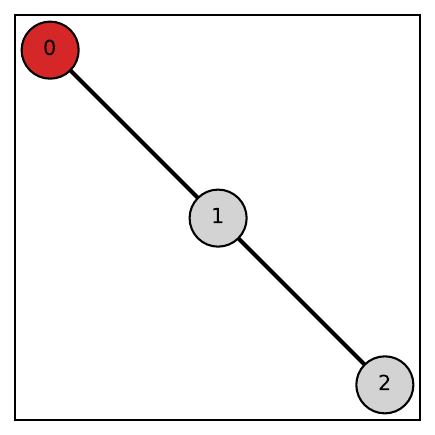}
			\caption{\(P_2^\bullet\)}
		\end{subfigure}
		\hfill
		\begin{subfigure}{0.23\textwidth}
			\centering
			\includegraphics[width=\linewidth]{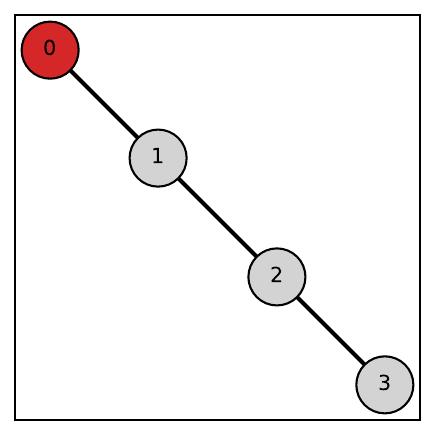}
			\caption{\(P_3^\bullet\)}
		\end{subfigure}
		\hfill
		\begin{subfigure}{0.23\textwidth}
			\centering
			\includegraphics[width=\linewidth]{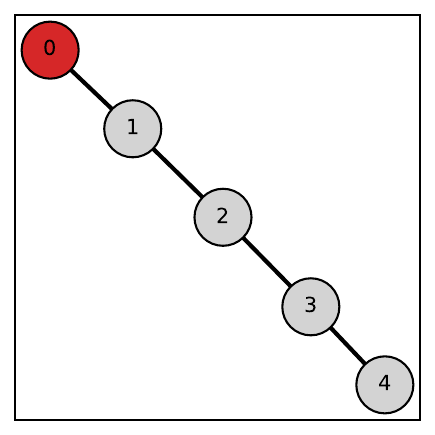}
			\caption{\(P_4^\bullet\)}
		\end{subfigure}
		
		\caption{Rooted motifs used in the numerical experiments. The root vertex is highlighted in red.}
		\label{fig:rooted-motifs-grid}
	\end{figure}

	\section{Numerical experiments}

	In the numerical experiments, we do not restrict the signature to rooted paths. Although
	rooted paths are useful theoretically, in particular for the finite-rank result above,
	they may be highly correlated and therefore need not provide the most informative
	signature in practice. We instead use a collection of small rooted motifs designed to
	capture complementary local structures, including first-order and higher-order
	connectivity patterns. The selected rooted motifs are
	\[
	\mathcal F
	=
	\left\{
	K_2^\bullet,
	K_3^\bullet,
	C_4^\bullet,
	D^\bullet,
	K_4^\bullet,
	\mathrm{Paw}^\bullet,
	\mathrm{Butterfly}^\bullet,
	C_5^\bullet,
	K_5^\bullet,
	P_2^\bullet,\ldots,P_L^\bullet
	\right\}.
	\]
	These are illustrated in Figure~\ref{fig:rooted-motifs-grid}

	\subsection{Experiment 1: Stochastic block models}\label{subsec:experiment1}
	
	We first consider a collection of stochastic block models, a classical family of latent network models widely used for community detection and graphon approximation \citep{Holland1983,Bickel2009,Abbe2018}. The models are designed to test whether rooted motif signatures can distinguish latent classes beyond degree information. Each graph is
	generated from an stochastic block model with \(Q\) blocks, block proportions \(\pi\), and connectivity matrix
	\[
	P(A_{ij}=1\mid Z_i=a,Z_j=b)= B_{ab},
	\]
	where \(B\in[0,1]^{Q\times Q}\) is symmetric and \(Z_i\in\{1,\ldots,Q\}\) denotes the
	latent block label of node \(i\). We choose six stochastic block models, summarized in Table~\ref{tab:sbm-library}. The models are chosen to include assortative, disassortative, equal-degree, role-like, cyclic, and higher-dimensional block structures, which are common benchmarks for community detection and latent block modeling \citep{Nowicki2001,Rohe2011,Lei2015}.

	For each model, we generate one graph with \(n=1000\) nodes and compute, for every node,
	its empirical rooted motif signature. The signature includes the empirical degree, rooted
	triangles, rooted four-cycles, rooted diamonds, rooted \(K_4\) cliques, rooted paw
	motifs, rooted butterfly motifs, rooted five-cycles, rooted \(K_5\) cliques, and
	walk-based rooted path coordinates. The resulting signature vectors are standardized and projected onto their first two
	principal components for visualization \citep{Jolliffe2002}. We compare this two-dimensional motif representation with the degree-only representation.
	
	Figure~\ref{fig:degree-vs-motif-pca-sbm} displays the results. In the left panel of each subfigure, nodes are represented only by their empirical degree, with a small vertical jitter added for visualization. In the right panel, nodes are represented by the first two principal components of their rooted motif signatures. Colors indicate the true blocks, and ellipses summarize the dispersion of each block. The degree-only representation often produces substantial overlap between latent blocks, especially in the equal-degree setting. In contrast, the projection of rooted motif signatures onto the first two principal components provides a clearer separation of the latent blocks. This illustrates that higher-order rooted motif information captures structural differences between blocks that are not visible from degree information alone.
	
	\begin{table}[htbp]
		\centering
		\caption{SBM models used and connectivity matrices are in the Appendix~\ref{fig:sbm-library-heatmaps}.}
		\label{tab:sbm-library}
		\begin{tabular}{lllp{6.2cm}}
			\toprule
			Model & \(Q\) & \(\pi\) & Description \\
			\midrule
			Assortative-2 & 2 & \((0.45,0.55)\)
			& Two assortative blocks with unequal within-block strengths. \\
			
			Disassortative-2 & 2 & \((0.40,0.60)\)
			& Two disassortative blocks without exchange symmetry. \\
			
			Equal-degree-3 & 3 & \((1/3,1/3,1/3)\)
			& Three balanced blocks with equal row sums but no block automorphism. \\
			
			Role-3 & 3 & \((0.20,0.45,0.35)\) & Three asymmetric role-like blocks. \\
			
			Broken-cycle-4 & 4 & \((0.18,0.27,0.25,0.30)\)
			& Four-block cycle structure with broken rotational and reflection symmetries. \\
			
			Gradient-5 & 5 & \((0.14,0.19,0.22,0.25,0.20)\)
			& Five-block gradient with unequal block masses. \\
			\bottomrule
		\end{tabular}
	\end{table}

	\begin{figure}[htbp]
		\centering
		
		\begin{subfigure}{0.48\textwidth}
			\includegraphics[width=\linewidth]{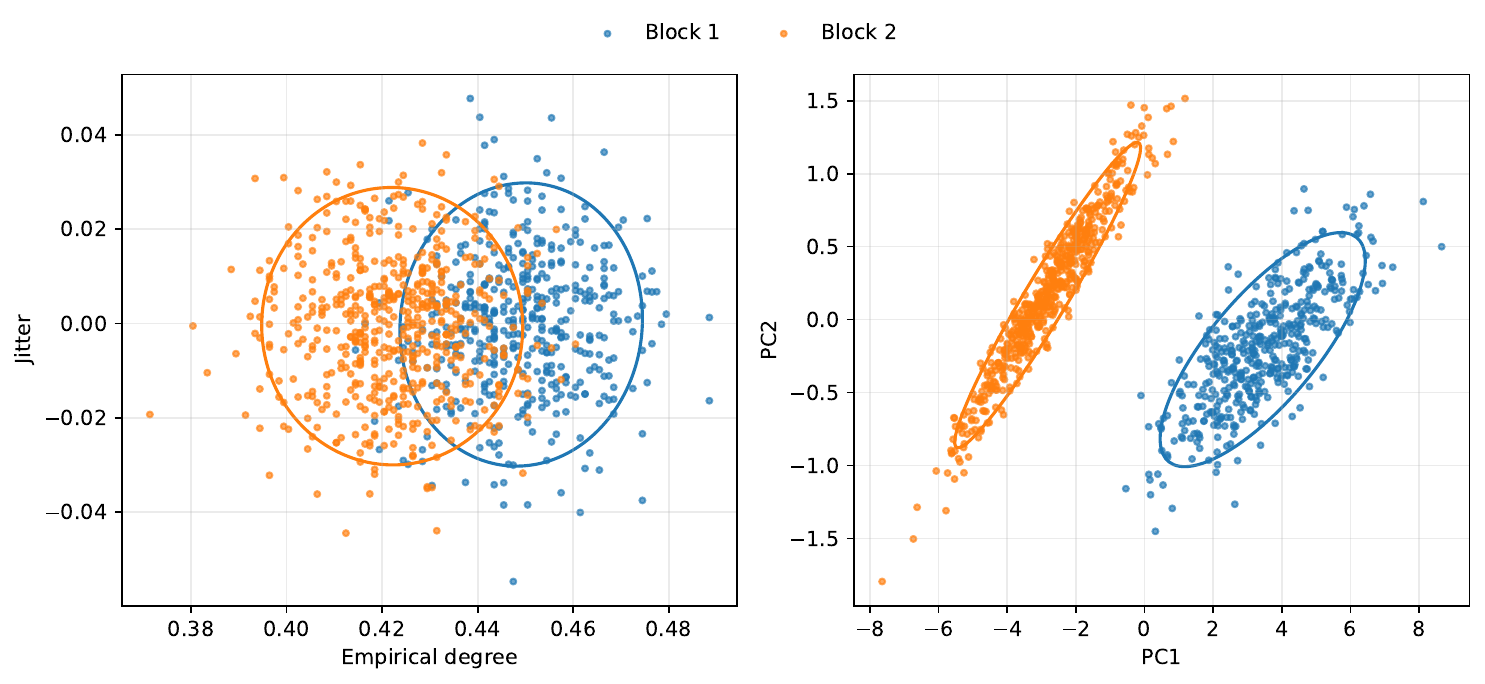}
			\caption{Assortative-2}
		\end{subfigure}
		\hfill
		\begin{subfigure}{0.48\textwidth}
			\includegraphics[width=\linewidth]{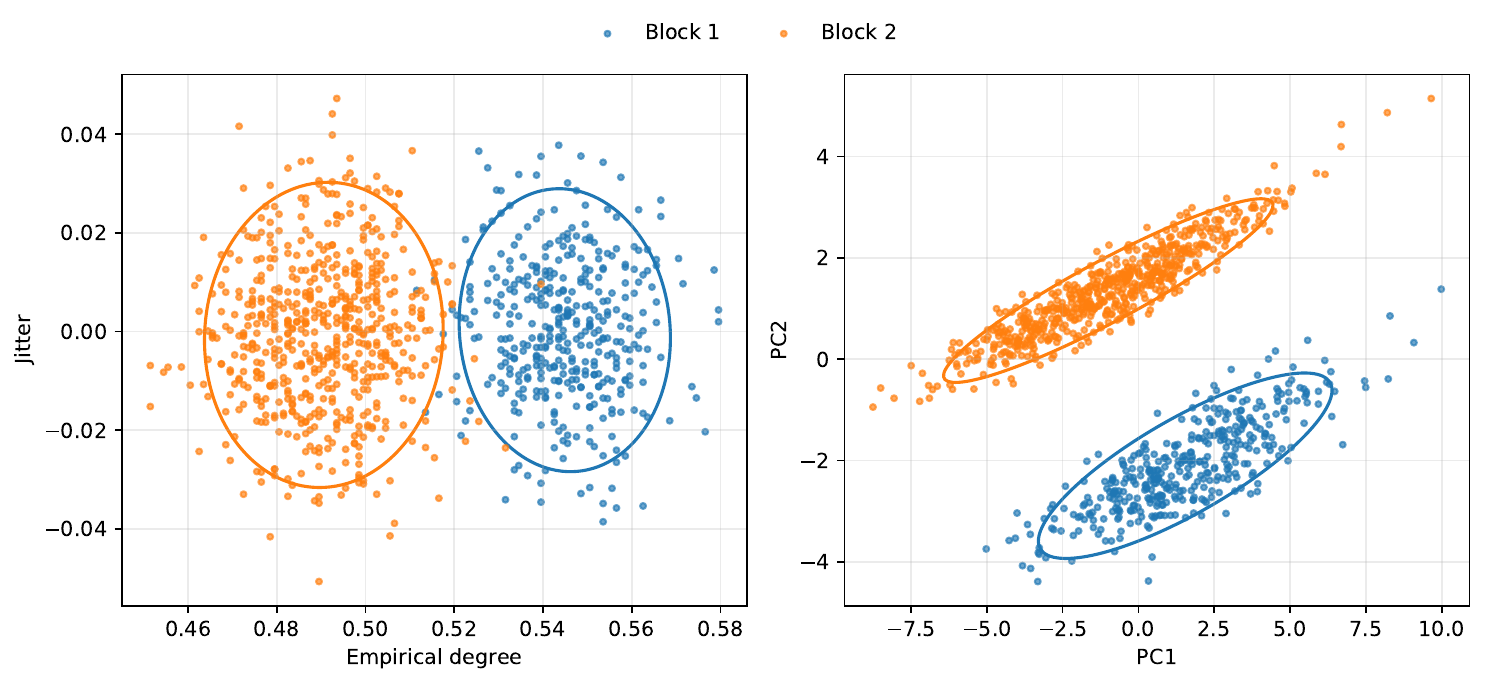}
			\caption{Disassortative-2}
		\end{subfigure}
		
		\vspace{0.3cm}
		
		\begin{subfigure}{0.48\textwidth}
			\includegraphics[width=\linewidth]{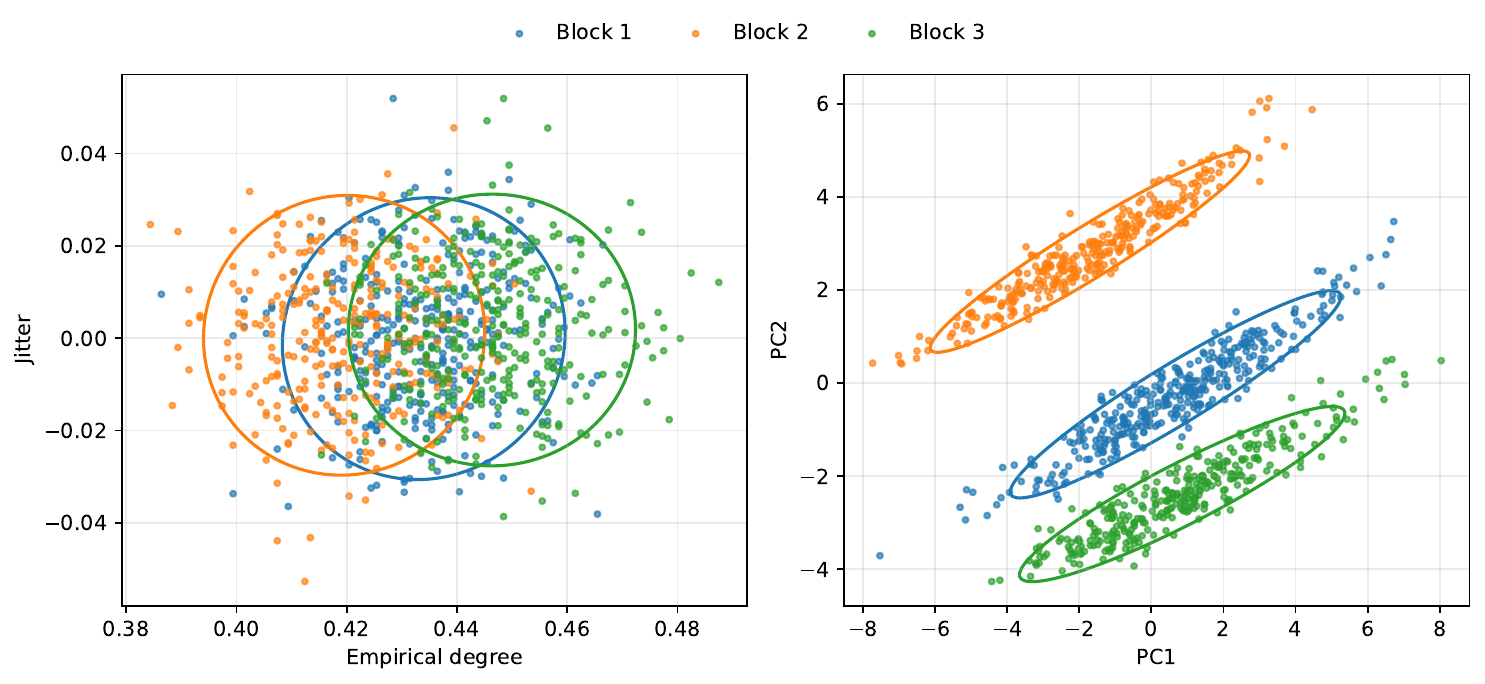}
			\caption{Equal-degree-3}
		\end{subfigure}
		\hfill
		\begin{subfigure}{0.48\textwidth}
			\includegraphics[width=\linewidth]{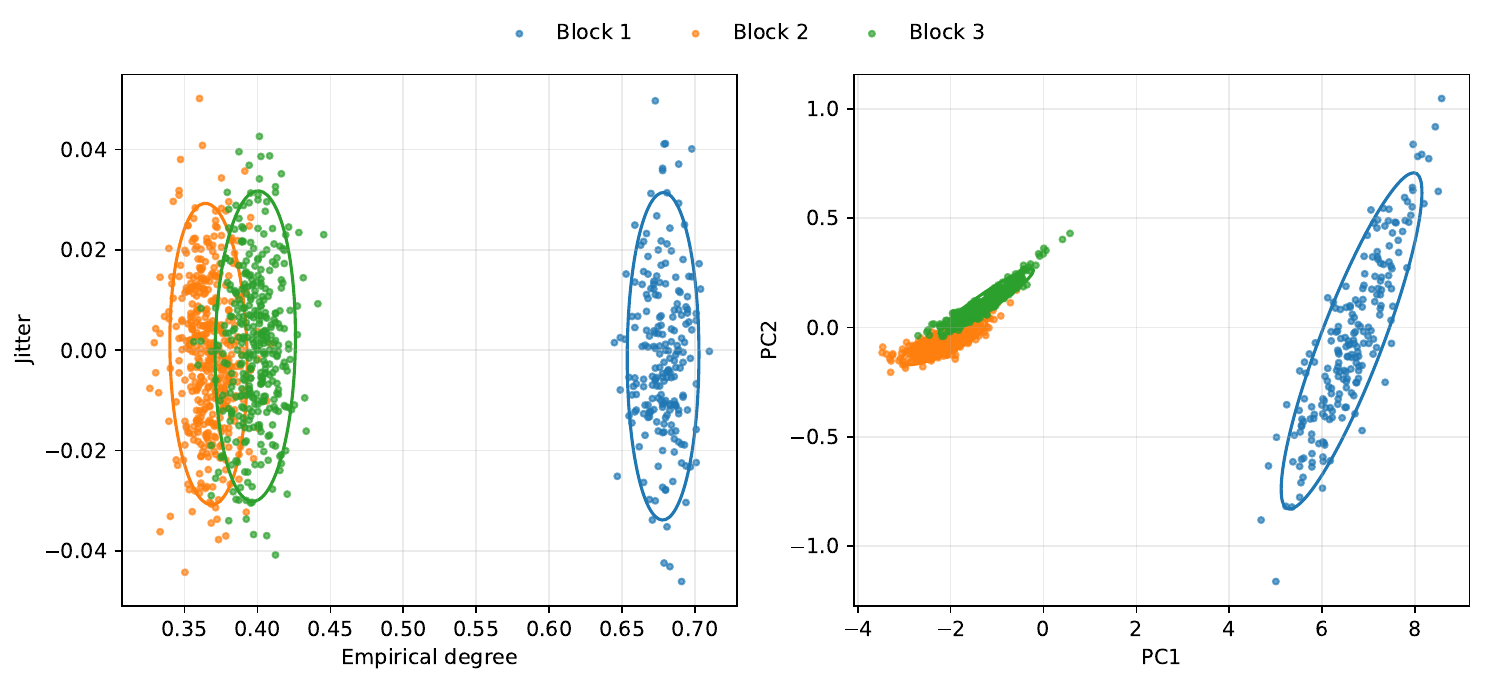}
			\caption{Role-3}
		\end{subfigure}
		
		\vspace{0.3cm}
		
		\begin{subfigure}{0.48\textwidth}
			\includegraphics[width=\linewidth]{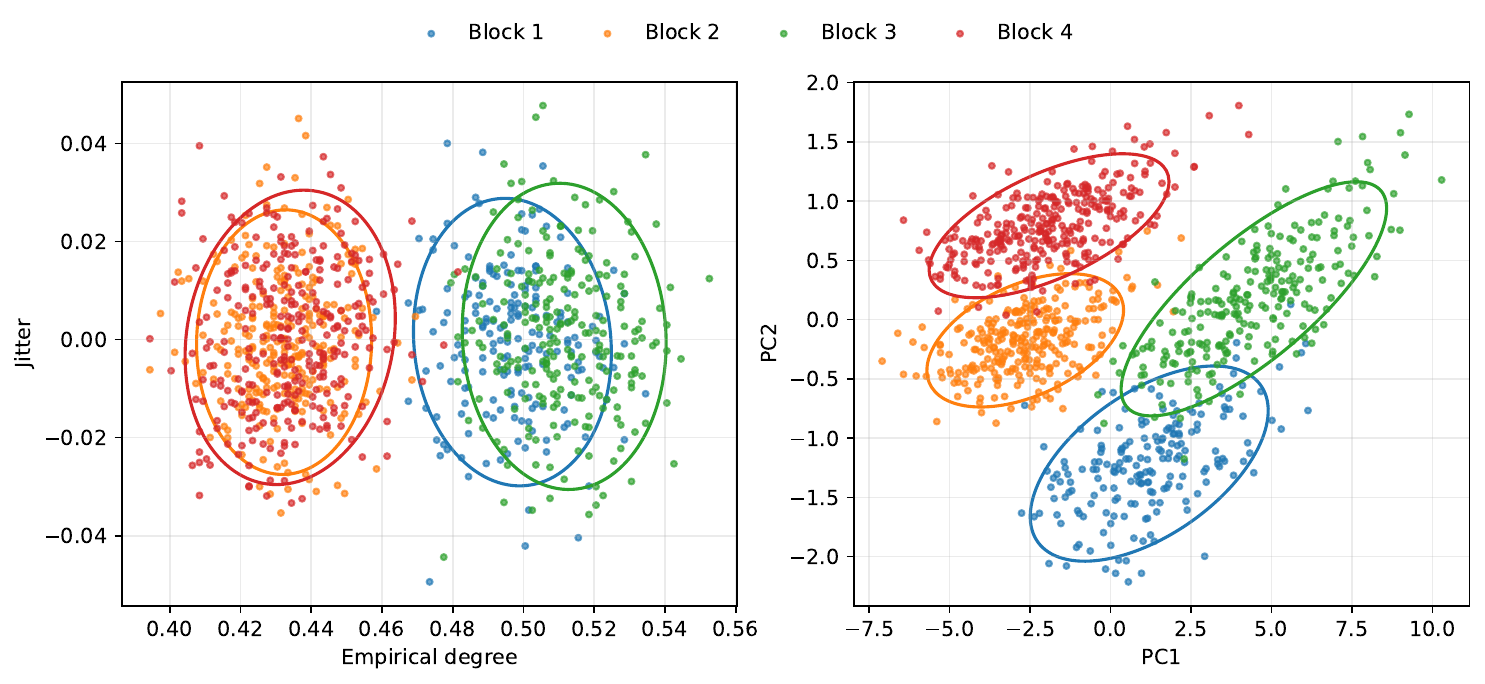}
			\caption{Broken-cycle-4}
		\end{subfigure}
		\hfill
		\begin{subfigure}{0.48\textwidth}
			\includegraphics[width=\linewidth]{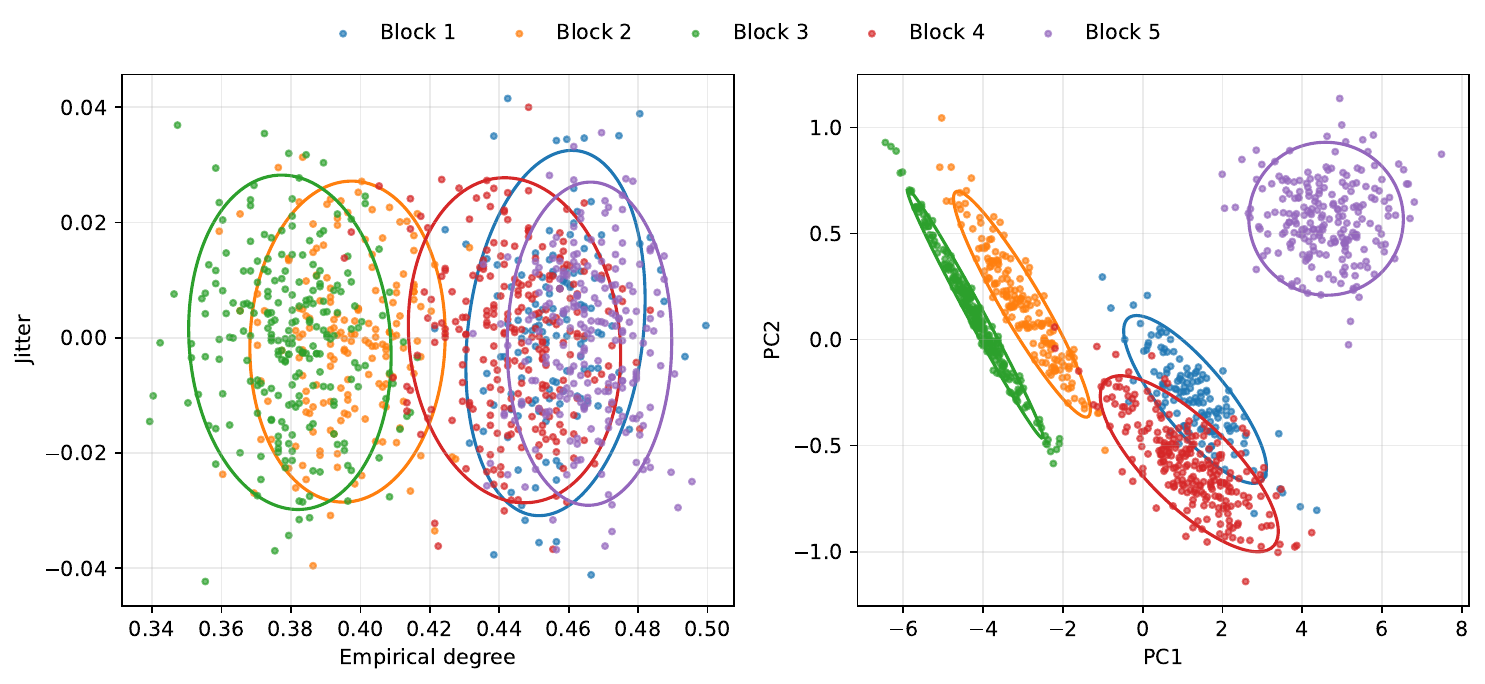}
			\caption{Gradient-5}
		\end{subfigure}
		
		\caption{Degree-only representation and PCA of rooted motif signatures for the SBM models.}
		\label{fig:degree-vs-motif-pca-sbm}
	\end{figure}

	\subsection{Experiment 2: Continuous graphons with degree plateaux}
	We next consider a continuous finite-rank graphon, a class closely related to spectral representations of graphon operators and low-rank network models \citep{Wolfe2013}. We  consider a graphon designed to exhibit plateau-like regions in the degree function while preserving non-trivial higher-order motif information, defined as
	\begin{equation}
		W_1(u,v)
		=
		0.35
		+
		0.10\,h(u)h(v)
		+
		0.22\sin(6\pi u)\sin(6\pi v),
	\end{equation}
	where
	\[
	h(u)
	=
	s\!\left(\frac{u}{0.3}\right)\mathbf 1_{\{u<0.3\}}
	+
	\mathbf 1_{\{0.3\le u\le 0.7\}}
	+
	\left[
	1+s\!\left(\frac{u-0.7}{0.3}\right)
	\right]\mathbf 1_{\{u>0.7\}},
	\qquad
	s(x)=3x^2-2x^3.
	\]
	
	For this graphon, we compute the population rooted motif curves associated with the family of selected rooted motifs \(\mathcal{F}\). These quantities are evaluated numerically through discretized graphon integral operators on a fine grid.
	
	Figures~\ref{fig:plateau-relay-curves} shows the resulting motif curves. The degree function exhibits extended regions where its variation is limited, suggesting that nodes located in those regions would be difficult to distinguish using degree information alone. In contrast, several higher-order motif coordinates vary substantially within the same intervals. 
	
	\begin{figure}[t]
		\centering
		\includegraphics[width=\textwidth]{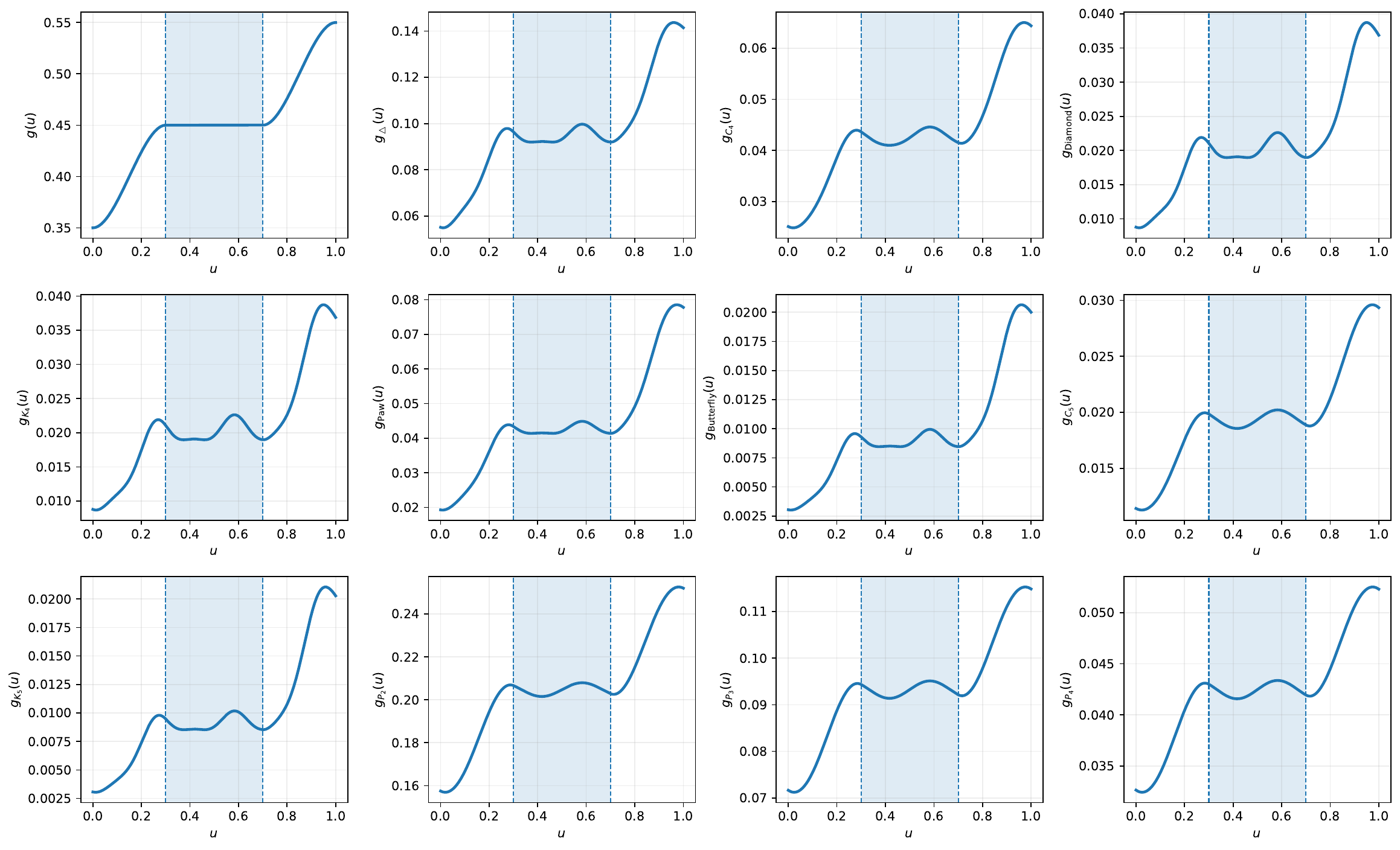}
		\caption{
			Population rooted motif curves associated with the graphon $W_1$.
			The highlighted interval corresponds to the plateau region of the degree profile.
		}
		\label{fig:plateau-relay-curves}
	\end{figure}

	\section{Discussion}
	
	This work proposes rooted motif signatures as a way to move beyond degree-based identifiability in graphon models. The main message is that local higher-order motif information can distinguish latent positions that degree alone cannot separate. In this sense, the identifiability based on rooted motif signatures provides a sufficient condition for constructing richer latent representations, while being less restrictive than strict degree monotonicity.
	
	For finite-rank graphons, our results show that signature-identifiability coincides with
	the necessary graphon identifiability condition of \cite{Borgs2010}. Indeed, under generic spectral assumptions, rooted motif signatures determine connectivity profiles and therefore separate all twin classes.
	The situation is more subtle for arbitrary graphons. Distinct latent positions may have
	identical rooted motif signatures even when their connectivity profiles are different.
	This can happen when the graphon is invariant under a nontrivial symmetry of the latent
	space. In such cases, signature-identifiability cannot coincide with the necessary
	graphon identifiability condition in full generality.
	
	Several extensions are natural. One direction is to use motif signatures for recovering latent groups, such as blocks in stochastic block models, especially in settings where degree information is uninformative. Another is to use motif-induced distances in graphon estimation procedures based on smoothing, nearest-neighbor averaging, or local aggregation. Rooted motif signatures could also be used as node features in downstream learning tasks, such as link
	prediction, node classification, or graph neural network architectures without external covariates. Finally, an important practical question is how to select the motif family and possibly learn motif weights, since different motifs may carry different information depending on the underlying graphon.

	% ===== BIBLIOGRAPHIE =====
	\bibliographystyle{apalike}
	%\bibliography{BiblioIdentifiability}
	\bibliography{bibliographie}
	
	\appendix
	\section{Proofs of theoretical results}	
	\subsection{Proof of Proposition~\ref{prop:finite-rank-path}}
	
	\begin{proof}
		Since \(W\) has rank \(m\), the associated integral operator satisfies
		\begin{align*}
			T_W f
			&=
			\sum_{j=1}^m
			\lambda_j
			\langle f,\phi_j\rangle
			\phi_j .
		\end{align*}
		Applying this identity to the constant function \(\mathbf 1\), we obtain
		\begin{align*}
			T_W\mathbf 1
			&=
			\sum_{j=1}^m
			\lambda_j
			\langle \mathbf 1,\phi_j\rangle
			\phi_j .
		\end{align*}
		Iterating the operator then gives, for every \(k\geq 1\),
		\begin{align*}
			T_W^k\mathbf 1
			&=
			T_W^{k-1}
			\left(
			\sum_{j=1}^m
			\lambda_j
			\langle \mathbf 1,\phi_j\rangle
			\phi_j
			\right) \\
			&=
			\sum_{j=1}^m
			\lambda_j
			\langle \mathbf 1,\phi_j\rangle
			T_W^{k-1}\phi_j \\
			&=
			\sum_{j=1}^m
			\lambda_j
			\langle \mathbf 1,\phi_j\rangle
			\lambda_j^{k-1}\phi_j \\
			&=
			\sum_{j=1}^m
			\lambda_j^k
			\langle \mathbf 1,\phi_j\rangle
			\phi_j .
		\end{align*}
		
		Now observe that the rooted path density on \(k+1\) nodes is given by
		\begin{align*}
			t(P_{k+1}^{\bullet},W)(u)
			&=
			\int_{[0,1]^k}
			W(u,x_1)W(x_1,x_2)\cdots W(x_{k-1},x_k)
			\,dx_1\cdots dx_k \\
			&=
			(T_W^k\mathbf 1)(u).
		\end{align*}
		Hence, for \(k=1,\ldots,m\),
		\begin{align*}
			t(P_{k+1}^{\bullet},W)(u)
			&=
			\sum_{j=1}^m
			\lambda_j^k
			\langle \mathbf 1,\phi_j\rangle
			\phi_j(u).
		\end{align*}
		Therefore the first \(m\) rooted path densities satisfy
		\begin{align*}
			\begin{pmatrix}
				t(P_2^\bullet,W)(u)\\
				t(P_3^\bullet,W)(u)\\
				\vdots\\
				t(P_{m+1}^\bullet,W)(u)
			\end{pmatrix}
			&=
			\begin{pmatrix}
				\lambda_1 c_1 & \lambda_2 c_2 & \cdots & \lambda_m c_m\\
				\lambda_1^2 c_1 & \lambda_2^2 c_2 & \cdots & \lambda_m^2 c_m\\
				\vdots & \vdots & & \vdots\\
				\lambda_1^m c_1 & \lambda_2^m c_2 & \cdots & \lambda_m^m c_m
			\end{pmatrix}
			\begin{pmatrix}
				\phi_1(u)\\
				\phi_2(u)\\
				\vdots\\
				\phi_m(u)
			\end{pmatrix},
		\end{align*}
		where
		\(
		c_j=\langle \mathbf 1,\phi_j\rangle .
		\)
		The matrix above can be written as
		\[
		V\,\operatorname{diag}(c_1,\ldots,c_m),
		\]
		where
		\[
		V=
		\begin{pmatrix}
			\lambda_1 & \lambda_2 & \cdots & \lambda_m\\
			\lambda_1^2 & \lambda_2^2 & \cdots & \lambda_m^2\\
			\vdots & \vdots & & \vdots\\
			\lambda_1^m & \lambda_2^m & \cdots & \lambda_m^m
		\end{pmatrix}
		\]
		is a Vandermonde matrix. Since the eigenvalues
		\(\lambda_1,\ldots,\lambda_m\) are pairwise distinct,
		\[
		\det(V)
		=
		\Bigl(\prod_{j=1}^m \lambda_j\Bigr)
		\prod_{1\le i<j\le m}
		(\lambda_j-\lambda_i)
		\neq 0.
		\]
		Therefore \(V\) is invertible. Moreover,
		\[
		\det\!\bigl(\operatorname{diag}(c_1,\ldots,c_m)\bigr)
		=
		\prod_{j=1}^m c_j
		\neq 0
		\]
		since \(c_j=\langle \mathbf 1,\phi_j\rangle\neq0\) for every \(j\). Hence
		\(
		V\,\operatorname{diag}(c_1,\ldots,c_m)
		\)
		is invertible. Hence, the rooted path densities
		\(
		t(P_k^\bullet,W)(u),
		\,
		k=2,\ldots,m+1,
		\)
		determine the spectral coordinates \(\Theta(u)\).
	\end{proof}

	\subsection{Proof of Theorem~\ref{thm:empirical-signature-concentration}}
	
	\begin{proof}	
		It is enough to prove the result for one fixed rooted motif \((F,r)\). Let
		\[
		m=|V(F)|,
		\qquad
		e=|E(F)|.
		\]
		For a node \(i\), write
		\[
		\widehat t_i(F)
		=
		\frac{1}{(n-1)_{m-1}}
		\sum_{\psi:V(F)\setminus\{r\}\hookrightarrow[n]\setminus\{i\}}
		\prod_{\{a,b\}\in E(F)}
		A_{\psi_i(a)\psi_i(b)}.
		\]
		Conditionally on
		\(
		\mathcal U=(U_1,\ldots,U_n),
		\)
		the conditional expectation of \(\widehat t_i(F)\) is
		\begin{align*}
			\bar t_i(F)
			&:=
			\mathbb E\!\left[
			\widehat t_i(F)
			\,\middle|\,
			\mathcal U
			\right] \\
			&=
			\frac{1}{(n-1)_{m-1}}
			\sum_{\psi}
			\prod_{\{a,b\}\in E(F)}
			W(U_{\psi_i(a)},U_{\psi_i(b)}).
		\end{align*}

		We decompose the error as
		\[
		\widehat t_i(F)-t((F,r),W)(U_i)
		=
		\Bigl(\widehat t_i(F)-\bar t_i(F)\Bigr)
		+
		\Bigl(\bar t_i(F)-t((F,r),W)(U_i)\Bigr).
		\]
		The first term is the edge-sampling error, conditionally on the latent variables \(\mathcal{U}\). Changing an edge incident to the root \(i\) can affect at most \(C_F n^{m-2}\)
		rooted embeddings, whereas changing an edge not incident to \(i\) can affect at most
		\(C_F n^{m-3}\) rooted embeddings. Since the normalizing factor is of order
		\(n^{m-1}\), the corresponding bounded-difference constants are respectively
		\(C_F/n\) and \(C_F/n^2\). For an edge \(e=(a,b)\), let
		\[
		c_e
		=
		\sup_{A,A'}
		\left|
		\widehat t_i(F;A)
		-
		\widehat t_i(F;A')
		\right|,
		\]
		where \(A\) and \(A'\) differ only in the entry corresponding to \(e\).Therefore
		\[
		\sum_e c_e^2
		\leq
		n\left(\frac{C_F}{n}\right)^2
		+
		n^2\left(\frac{C_F}{n^2}\right)^2
		\leq
		\frac{C_F'}{n}.
		\]
		McDiarmid's inequality \citep{McDiarmid1989} then yields
		\begin{equation}\label{eq:edge-sampling-bound}
			\mathbb P\left(
			\left|
			\widehat t_i(F)-\bar t_i(F)
			\right|>\eta
			\,\middle|\,
			\mathcal U
			\right)
			\leq
			2\exp(-c_F n\eta^2).
		\end{equation}
		We now control the second term,
		\(
		\bar t_i(F)-t((F,r),W)(U_i).
		\) This is the latent-sampling error.
		Conditionally on \(U_i\), the quantity \(\bar t_i(F)\) is a bounded \(U\)-statistic of
		order \(m-1\), based on the latent variables \(\{U_j:j\neq i\}\). Its kernel is
		\[
		h_{F,U_i}(x_1,\ldots,x_{m-1})
		=
		\prod_{\{a,b\}\in E(F)}
		W(z_a,z_b),
		\]
		where \(z_r=U_i\) and \(z_a=x_a\) for \(a\neq r\). Since \(0\leq W\leq 1\), this kernel
		is bounded by \(1\). Moreover,
		\begin{align*}
			\mathbb E\!\left[
			\bar t_i(F)
			\,\middle|\,
			U_i
			\right]
			&=
			\int_{[0,1]^{m-1}}
			\prod_{\{a,b\}\in E(F)}
			W(z_a,z_b)
			\prod_{a\neq r}dz_a  \\
			&=
			t((F,r),W)(U_i).
		\end{align*}
		Thus the second term is a centered bounded \(U\)-statistic. By the bounded-differences inequality for \(U\)-statistics \citep{Boucheron2013,Hoeffding1963,Arcones1995}, there exist constants
		\(C_F,c_F>0\), depending only on \(F\), such that for every \(\eta>0\),
		\begin{equation}
			\label{eq:latent-sampling-bound}
			\mathbb P\left(
			\left|
			\bar t_i(F)-t((F,r),W)(U_i)
			\right|>\eta
			\,\middle|\,
			U_i
			\right)
			\leq
			2\exp(-c_F n\eta^2).
		\end{equation}
		
		Combining the edge-sampling bound~\ref{eq:edge-sampling-bound} and the latent-sampling bound~\ref{eq:latent-sampling-bound}, we obtain, for every
		\(\eta>0\),
		\begin{equation}
			\label{eq:single-node-motif-bound}
			\mathbb P\left(
			\left|
			\widehat t_i(F)-t((F,r),W)(U_i)
			\right|>2\eta
			\right)
			\leq
			4\exp(-c_F n\eta^2).
		\end{equation}
		We now apply a union bound over all nodes and all motifs in
		\(
		\mathcal F=\{(F_1,r_1),\ldots,(F_K,r_K)\}.
		\)
		Let
		\[
		Z_{i,k}
		=
		\left|
		\widehat t_i(F_k)-t((F_k,r_k),W)(U_i)
		\right|.
		\]
		From \eqref{eq:single-node-motif-bound}, there exist constants \(C_0,c_0>0\), depending
		only on \(\mathcal F\), such that
		\[
		\mathbb P(Z_{i,k}>s)
		\leq
		C_0\exp(-c_0 n s^2)
		\]
		for every \(i=1,\ldots,n\), \(k=1,\ldots,K\), and \(s>0\). Therefore,
		\begin{align*}
			\mathbb P\left(
			\max_{1\leq i\leq n}
			\max_{1\leq k\leq K}
			Z_{i,k}
			>s
			\right)
			&\leq
			\sum_{i=1}^n
			\sum_{k=1}^K
			\mathbb P(Z_{i,k}>s) \\
			&\leq
			nK C_0\exp(-c_0 n s^2).
		\end{align*}
		
		For every \(\delta\in(0,1)\), choosing \(s= C
		\sqrt{
			\frac{\log(nK)+\log(1/\delta)}
			{n}
		}\), we have
		\[
		\mathbb P\left(
		\max_{1\leq i\leq n}
		\max_{1\leq k\leq K}
		\left|
		\widehat t_i(F_k)-t((F_k,r_k),W)(U_i)
		\right|
		>
		C
		\sqrt{
			\frac{\log(nK)+\log(1/\delta)}
			{n}
		}
		\right)
		\leq
		\delta .
		\]
		Since
		\[
		\left\|
		\widehat\Phi_i^{\mathcal F}
		-
		\Phi_W^{\mathcal F}(U_i)
		\right\|_2
		\leq
		\sqrt K
		\max_{1\leq k\leq K}
		\left|
		\widehat t_i(F_k)-t((F_k,r_k),W)(U_i)
		\right|,
		\]
		it follows that, with probability at least \(1-\delta\),
		\[
		\max_{1\leq i\leq n}
		\left\|
		\widehat\Phi_i^{\mathcal F}
		-
		\Phi_W^{\mathcal F}(U_i)
		\right\|_2
		\leq
		C
		\sqrt{
			\frac{\log(nK)+\log(1/\delta)}
			{n}
		}.
		\]
		This proves the theorem.
	\end{proof}

	\subsection{Proof of the corollary~\ref{cor:signature-distance}}
	
	\begin{proof}
		For \(i,j\in\{1,\ldots,n\}\), recall that
		\[
		\widehat d_{\mathcal F,w}(i,j)
		=
		\left\|
		\widehat\Phi_i^{\mathcal F}
		-
		\widehat\Phi_j^{\mathcal F}
		\right\|_w
		\]
		and
		\[
		d_{\mathcal F,w}(U_i,U_j)
		=
		\left\|
		\Phi_W^{\mathcal F}(U_i)
		-
		\Phi_W^{\mathcal F}(U_j)
		\right\|_w,
		\]
		where
		\[
		\|x\|_w
		=
		\left(
		\sum_{k=1}^K
		w_k x_k^2
		\right)^{1/2}.
		\]
		Therefore,
		\begin{align*}
			\left|
			\widehat d_{\mathcal F,w}(i,j)
			-
			d_{\mathcal F,w}(U_i,U_j)
			\right| 
			&=
			\left|
			\left\|
			\widehat\Phi_i^{\mathcal F}
			-
			\widehat\Phi_j^{\mathcal F}
			\right\|_w
			-
			\left\|
			\Phi_W^{\mathcal F}(U_i)
			-
			\Phi_W^{\mathcal F}(U_j)
			\right\|_w
			\right|.
		\end{align*}
		Using the reverse triangle inequality,
		\[
		\bigl|\|a\|_w-\|b\|_w\bigr|
		\leq
		\|a-b\|_w,
		\]
		with
		\[
		a=
		\widehat\Phi_i^{\mathcal F}
		-
		\widehat\Phi_j^{\mathcal F},
		\qquad
		b=
		\Phi_W^{\mathcal F}(U_i)
		-
		\Phi_W^{\mathcal F}(U_j),
		\]
		we obtain
		\begin{align*}
			\left|
			\widehat d_{\mathcal F,w}(i,j)
			-
			d_{\mathcal F,w}(U_i,U_j)
			\right| 
			&\leq
			\left\|
			\left(
			\widehat\Phi_i^{\mathcal F}
			-
			\widehat\Phi_j^{\mathcal F}
			\right)
			-
			\left(
			\Phi_W^{\mathcal F}(U_i)
			-
			\Phi_W^{\mathcal F}(U_j)
			\right)
			\right\|_w \\
			&=
			\left\|
			\left(
			\widehat\Phi_i^{\mathcal F}
			-
			\Phi_W^{\mathcal F}(U_i)
			\right)
			-
			\left(
			\widehat\Phi_j^{\mathcal F}
			-
			\Phi_W^{\mathcal F}(U_j)
			\right)
			\right\|_w \\
			&\leq
			\left\|
			\widehat\Phi_i^{\mathcal F}
			-
			\Phi_W^{\mathcal F}(U_i)
			\right\|_w
			+
			\left\|
			\widehat\Phi_j^{\mathcal F}
			-
			\Phi_W^{\mathcal F}(U_j)
			\right\|_w.
		\end{align*}
		Taking the maximum over \(i\) and \(j\), we get
		\begin{align*}
			\max_{1\leq i,j\leq n}
			\left|
			\widehat d_{\mathcal F,w}(i,j)
			-
			d_{\mathcal F,w}(U_i,U_j)
			\right| 
			&\leq
			2
			\max_{1\leq i\leq n}
			\left\|
			\widehat\Phi_i^{\mathcal F}
			-
			\Phi_W^{\mathcal F}(U_i)
			\right\|_w.
		\end{align*}
		Moreover, for every \(i\),
		\begin{align*}
			\left\|
			\widehat\Phi_i^{\mathcal F}
			-
			\Phi_W^{\mathcal F}(U_i)
			\right\|_w
			&=
			\left(
			\sum_{k=1}^K
			w_k
			\left[
			\widehat t_i(F_k)
			-
			t((F_k,r_k),W)(U_i)
			\right]^2
			\right)^{1/2} \\
			&\leq
			\left(
			\sum_{k=1}^K
			w_k
			\right)^{1/2}
			\max_{1\leq k\leq K}
			\left|
			\widehat t_i(F_k)
			-
			t((F_k,r_k),W)(U_i)
			\right|.
		\end{align*}
		By Theorem~\ref{thm:empirical-signature-concentration}, with probability at least
		\(1-\delta\),
		\[
		\max_{1\leq i\leq n}
		\max_{1\leq k\leq K}
		\left|
		\widehat t_i(F_k)
		-
		t((F_k,r_k),W)(U_i)
		\right|
		\leq
		C
		\sqrt{
			\frac{\log(nK)+\log(1/\delta)}
			{n}
		}.
		\]
		Hence, on the same event,
		\[
		\max_{1\leq i,j\leq n}
		\left|
		\widehat d_{\mathcal F,w}(i,j)
		-
		d_{\mathcal F,w}(U_i,U_j)
		\right|
		\leq
		2C
		\left(
		\sum_{k=1}^K
		w_k
		\right)^{1/2}
		\sqrt{
			\frac{\log(nK)+\log(1/\delta)}
			{n}
		}.
		\]
		In particular, if the weights are normalized so that
		\[
		\sum_{k=1}^K w_k=1,
		\]
		then
		\[
		\max_{1\leq i,j\leq n}
		\left|
		\widehat d_{\mathcal F,w}(i,j)
		-
		d_{\mathcal F,w}(U_i,U_j)
		\right|
		\leq
		2C
		\sqrt{
			\frac{\log(nK)+\log(1/\delta)}
			{n}
		}.
		\]
		This proves the high-probability bound.
	\end{proof}

	\section{Stochastic block model library}
	\label{app:sbm-library}
	
	This appendix reports the stochastic block models used in Table~\ref{tab:sbm-library}. The corresponding connectivity matrices are displayed in Figure~\ref{fig:sbm-library-heatmaps}. 
	\begin{figure}[ht]
		\centering
		\includegraphics[width=\textwidth]{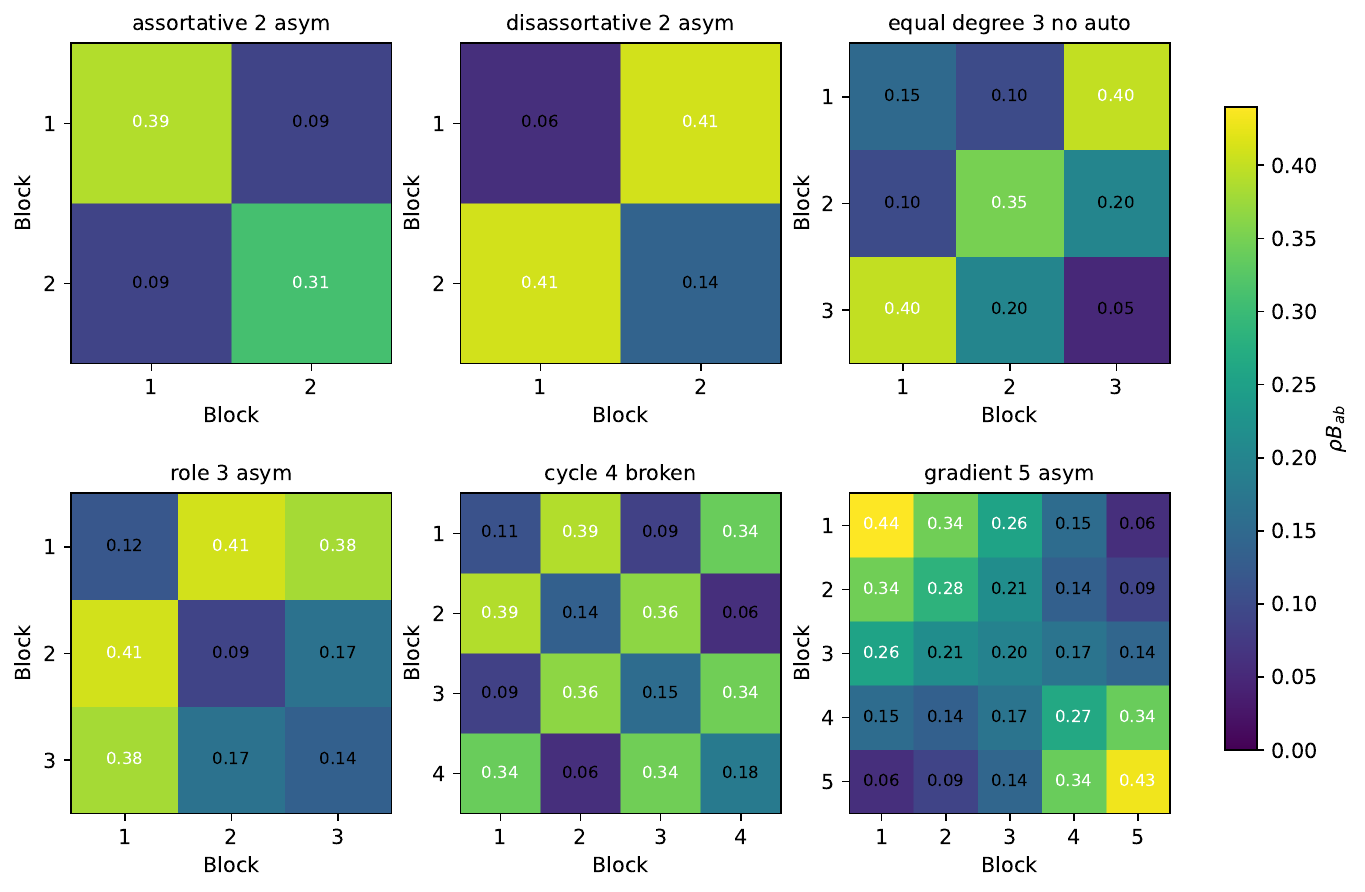}
		\caption{
			Connectivity matrices \(B\) defining the stochastic block model library used in Subsection~\ref{subsec:experiment1}.
		}
		\label{fig:sbm-library-heatmaps}
	\end{figure}

\end{document}